     \def\section{\@startsection{section}{1}%
     \z@{.7\linespacing\@plus\linespacing}{.5\linespacing}%
     {\bfseries
     \centering
     }}
     \def\@secnumfont{\bfseries}
\colorlet{blu1}{blue!70!black}
\colorlet{red1}{red!80}
\newtheorem{theorem}{Theorem}[section]
\newtheorem{lemma}[theorem]{Lemma}
\newtheorem{corollary}[theorem]{Corollary}
\theoremstyle{definition}
\newtheorem{definition}[theorem]{Definition}
\newtheorem{example}[theorem]{Example}
\theoremstyle{remark}
\newtheorem{remark}[theorem]{Remark}
\numberwithin{equation}{section}
\begin{document}

\title[Measures and Trajectory Properties in Oscillator Systems]{Measures and Trajectory Properties in Oscillator Systems
}

\author{Vsevolod Sakbaev*}
\address{Vsevolod Sakbaev: Steklov Mathematical Institute of Russian Academy of Science, Gubkin str. 8, Moscow 119991, Russia}
\email{fumi2003@mail.ru}

\author[Igor Volovich]{Igor Volovich*}
\thanks{* This research was founded by Russian Scientific Foundation under grant 19-11-00320.}
\address{Igor Volovich: 
Steklov Mathematical Institute of Russian Academy of Science, Gubkin str. 8, Moscow 119991, Russia}
\email{volovich@mi-ras.ru}

\subjclass[2010] {Primary 28D05, 37A05, 47A35; Secondary 37K10, 37N20}

\keywords{Linear flow on infinite-dimensional tori; periodic trajectory; transitive trajectory; non-wandering points; characteristic function of a measure.}

\begin{abstract} 
This paper investigates the properties of trajectories in harmonic oscillator systems equipped with a point, absolutely continuous, or singular measure.  

As demonstrated in \cite{VS24}, infinite-dimensional linear flows of countable oscillator systems exhibit a new class of trajectory behavior. Specifically, these trajectories are non-periodic, and their projections onto any four-dimensional symplectic subspace fail to be dense in the corresponding projection of the invariant torus. Such trajectories do not arise in finite-dimensional systems, are non-generic for countable oscillator systems, but become generic in the continual case.

We prove that for a countable harmonic oscillator system, every point on a non-degenerate invariant torus is a non-wandering point of the flow. In contrast, for a continual system with an absolutely continuous measure, all points on such a torus are wandering. Furthermore, for continual systems with a singular measure, we establish sufficient conditions on the measure and torus that rule out the existence of both transitive trajectories and non-wandering points. As an application, we exhibit a class of singular Bernoulli measures satisfying these conditions.
\end{abstract}

\maketitle

\section{Introduction}\label{introd}
Infinite systems of harmonic oscillators constitute an important class of infinite-dimensional Hamiltonian systems.  

In particular, every isolated quantum system can be represented by a unitary group in a complex Hilbert space. Since any such unitary group is mathematically equivalent to a system of harmonic oscillators, it follows that all isolated quantum systems are unitarily equivalent to classical harmonic oscillator systems \cite{Volovich21}.

A rigorous description of infinite-dimensional Hamiltonian systems relevant to this work is provided in the Appendix (see also \cite{Chernoff}). 

Notably, any Hamiltonian system with an invariant measure on its phase space can be interpreted as a system of harmonic oscillators. This follows from the Koopman-von Neumann theory, which states that when a Hamiltonian system's phase space is equipped with a flow-invariant measure, the corresponding Hamiltonian flow induces a group of unitary operators on the space of square-integrable functions with respect to this measure \cite{Khrennikov, KS-18, Volovich19, S23}.

A unitary group acting on a Hilbert space can be represented as the Hamiltonian flow of a harmonic oscillator system in a real representation of the Hilbert space endowed with a shift-invariant symplectic form \cite{Volovich21}. Here, a real representation of a complex Hilbert space refers to equipping the space with a real structure (or equivalently, a K$\ddot{\rm a}$hler structure) \cite{Khrennikov, KS-18, V25}.

The Hamiltonian flow of an oscillator system constitutes a Liouville-integrable system. Such systems admit a complete family of first integrals in involution, whose level surfaces form invariant tori \cite{Volovich21}.
An infinite dimentional ascillator systems arise in considerations of quantum random walks \cite{BPS24}.

An oscillator system is defined as a parameterized collection of non-interacting two-dimensional oscillators equipped with a measure on the parameter space. A system trajectory is represented by a square-integrable function on the parameter space, where the function's value at each parameter corresponds to the trajectory of an individual oscillator in its two-dimensional phase subspace.

The properties of these trajectories depend fundamentally on the type of measure associated with the oscillator system.

The Hamiltonian flow (linear flow) of oscillator systems on invariant tori in finite-dimensional symplectic spaces has been extensively studied in the literature \cite{Che, Dumas, 21, KH}. Related work includes investigations of complete systems of conserved functionals for the periodic sine-Gordon equation and their associated infinite-dimensional tori \cite{Schw}, as well as developments in infinite-dimensional KAM theory \cite{Che, KAM, Kuksin}.

In this work, we examine fundamental dynamical properties of such linear flows, including:
\begin{itemize}
    \item Existence or non-existence of periodic trajectories
    \item Presence or absence of topologically transitive trajectories  
    \item Occurrence or non-occurrence of non-wandering points
\end{itemize}

The ergodicity of linear flows on finite-dimensional tori is characterized by Weyl's theorem \cite{Che}. This result was extended to infinite-dimensional Hilbert spaces by von Neumann \cite{IvN}, while Segal \cite{Segal} studied ergodicity for subgroups of the orthogonal group in this setting. Recent work in \cite{KVV21} has further generalized Weyl's ergodicity theorem to infinite-dimensional cases.

Of particular relevance to our work is the ergodic theorem for infinite-dimensional linear flows on invariant tori equipped with Kolmogorov measure, established in \cite{VS24}. 

In this paper, we investigate these ergodic properties for both countable and continual systems of harmonic oscillators.

The system of $N$ harmonic oscillators constitutes a Hamiltonian system in the $2N$-dimensional Euclidean space $E = \mathbb{R}^{2N}$ endowed with a shift-invariant symplectic form. In the corresponding symplectic coordinates $(q,p) \in E$, the Hamiltonian function takes the form
\begin{equation}\label{HN}
H(q,p) = \sum_{j=1}^N \frac{\lambda_j}{2}(q_j^2 + p_j^2),
\end{equation}
where $\{\lambda_1, \ldots, \lambda_N\}$ are real constants.

The harmonic oscillator system \eqref{HN} possesses $N$ first integrals given by $I_j(q,p) = q_j^2 + p_j^2$ for $j = 1, \ldots, N$. For any collection of $N$ positive numbers $r = \{r_1, \ldots, r_N\}$, the level surface 
\begin{equation}
{\mathbb T}_r = \{(q,p) \in E : I_j(q,p) = r_j \text{ for } j = 1, \ldots, N\}
\end{equation}
forms a smooth $N$-dimensional manifold that remains invariant under the Hamiltonian flow. This manifold is referred to as an invariant torus of the harmonic oscillator system \eqref{HN}. The resulting Hamiltonian flow restricted to this invariant torus is called a linear flow \cite{KH}.


\begin{definition}\label{def:commensurate}
A set $A \subset \mathbb{R}$ is called \emph{rationally commensurate} if there exists a finite subset $\{a_1,\ldots,a_n\} \subset A$ such that every element of $A$ can be expressed as a finite linear combination of $\{a_1,\ldots,a_n\}$ with rational coefficients \cite{AA, KSF, KH}.
\end{definition}

\begin{definition}\label{def:strongly-commensurate} 
A set $A \subset \mathbb{R}$ is called \emph{strongly rationally commensurate} if for every finite subset $\{a_1,\ldots,a_m\} \subset A$ with $m \geq 2$, each $a_i$ ($1 \leq i \leq m$) can be written as a linear combination of the remaining $m-1$ elements with rational coefficients.
\end{definition}

\begin{remark}\label{rmk:commensurability}
For a finite set $A \subset \mathbb{R}$, the following equivalences hold:
\begin{enumerate}
    \item The rational commensurability of $A$ is equivalent to the resonance condition for $A$ (see \cite[page 4]{21}).
    
    \item Using the notion of resonance with finite multiplicity introduced in \cite{21}, we can characterize strong commensurability: A finite set $A$ of $N$ numbers is strongly rationally commensurate if and only if it forms a resonant set of maximum multiplicity $N-1$.
\end{enumerate}
\end{remark}

\begin{remark}
The condition for rational commensurability of a finite set of numbers $A$ is equivalent to the condition for the resonance of a set of numbers $A$ (see \cite{21}, page 4).
Defining the resonance of finite multiplicity of a set of numbers $A$ in \cite{21} makes it possible to present conditions for strong commensurability of a finite set of numbers $A$.
A set of $N$ numbers $A$ is strongly commensurate if and only if it is a resonant set of maximum multiplicity $N-1$.
\end{remark}

The following result is known as Jacobi's theorem (see \cite[page 114]{AA}):

\begin{theorem}[Jacobi]\label{Yak}
Consider the Hamiltonian system (\ref{HN}) with frequencies $\{\lambda_j\}_{j=1}^N$.

\begin{enumerate}
    \item A trajectory is periodic if and only if the frequency set $\{\lambda_j\}_{j=1}^N$ is strongly rationally commensurate.
    
    \item A trajectory is dense on the $N$-dimensional invariant torus $T = \{I_k = c_k\}_{k=1}^N$ if and only if the frequencies satisfy the non-resonance condition:
    \begin{equation}\label{10}
    \sum_{k=1}^N \lambda_k n_k = 0 \quad \text{with} \quad n_k \in \mathbb{Z} \implies n_k = 0 \quad \forall k=1,\ldots,N.
    \end{equation}
\end{enumerate}
\end{theorem}

Theorem 1.2 is also known as the Weyl-Kronecker theorem \cite{KVV21}.

In \cite{VS24}, infinite-dimensional analogues of Jacobi's theorem \cite{AA} are established. These results provide criteria for both periodicity and topological transitivity of trajectories on invariant tori in infinite systems of harmonic oscillators.

According to Weyl's theorem \cite{Che}, a linear flow on a $d$-dimensional torus is ergodic with respect to the Lebesgue measure on the invariant torus if and only if its frequency vector satisfies the rational independence condition. This result has been extended to infinite-dimensional invariant tori in \cite{KVV21, VS24}.

A straightforward generalization of Jacobi's theorem to countable oscillator systems would suggest that: 
\begin{quote}
A trajectory of a countable oscillator system is periodic if and only if its frequency collection is strongly rationally commensurate.
\end{quote}
However, this statement fails in infinite-dimensional settings, as demonstrated in \cite{VS24} and Example 1 below. Specifically, there exist countable oscillator systems with rationally commensurate frequency collections that nevertheless admit no periodic trajectories. This phenomenon is inherently infinite-dimensional and cannot occur in finite-dimensional phase spaces.

Finite oscillator systems exhibit a fundamental dichotomy: 
\begin{enumerate}
    \item Every trajectory is periodic, \emph{or}
    \item The system contains a subsystem of at least two oscillators whose trajectory is dense in the subsystem's invariant torus \cite{AA, Che}.
\end{enumerate}

In infinite-dimensional phase spaces, linear flows exhibit at least three distinct types of trajectories. Theorem 5.5 (illustrated in Example 5.4) demonstrates the existence of linear flows that possess neither periodic trajectories nor trajectories whose projections onto any four-dimensional symplectic subspace are dense in the corresponding projection of the invariant torus. 

This establishes the following classification of linear flow trajectories on infinite-dimensional tori:
\begin{enumerate}
    \item \textbf{Type I (Periodic)}: Trajectories that are periodic;
    
    \item \textbf{Type II (Projectively dense)}: Non-periodic trajectories whose projections are dense in the projection of invariant torus to some four-dimensional symplectic subspace;
    
    \item \textbf{Type III (Novel type)}: Non-periodic trajectories whose projections are nowhere dense in any four-dimensional symplectic subspace.
\end{enumerate}

The existence of Type III trajectories constitutes a fundamentally new phenomenon in infinite-dimensional systems, with no finite-dimensional analogue.

Results of of the present paper were announced in \cite{VS-Arch}. 
The paper is organized as follows:

\begin{itemize}
    \item[\S2] We present fundamental definitions and preliminary results on harmonic oscillator systems, establishing the necessary theoretical framework.

    \item[\S3] We derive a necessary condition for the existence of transitive trajectories, expressed in terms of the characteristic function of the oscillator system's measure (see (\ref{eq1})).

    \item[\S4] For systems equipped with a Bernoulli measure (notably including singular measures with respect to Lebesgue measure), we prove the absence of dense trajectories in the invariant torus.

    \item[\S5] We establish criteria for periodicity and transitivity of trajectories in countable oscillator systems.

    \item[\S6] We investigate the periodicity and transitivity properties for continual oscillator systems.

    \item[\S7] We demonstrate that:
    \begin{itemize}
        \item Trajectories of countable systems are non-wandering
        \item Trajectories of continual systems are wandering
    \end{itemize}

    \item[\S8] We summarize and discuss the paper's main results.

    \item[\S9] (Appendix) We provide supplementary material on infinite-dimensional Hamiltonian systems.
\end{itemize}

\section {Representation of Hamiltonian dynamics in the form of dynamics of a system of oscillators}

Following \cite{Volovich19}, any Hamiltonian system admitting an invariant measure possesses a completely Liouville-integrable representation. More precisely, there exists an isomorphism between the original Hamiltonian system in its Koopman representation and
a Hamiltonian system consisting of a family of harmonic oscillators.

{ 
A system of harmonic oscillators (SHO) is a triplet (see \cite{Volovich19})
\begin{equation}\label{eq1}
(X,\mu, {\bf V}):
\end{equation}
where $X$ is a locally compact space; $\mu$ is a Borel countable additive {\it regular} {\it nonnegative} (Radon) measure on the space $X$;
$\bf V$ is {a $1$--parameter, strongly continuous sub--group of the}
special unitary group in the Hilbert space $L_2(X,\mu ,{\mathbb C})\equiv \mathcal H$,
${\bf V}_t: L_2(X,\mu )\to L_2(X,\mu ),\quad t\in {\mathbb R},$
{is defined} by the equality
\begin{equation}\label{2}
{\bf V}_t\phi (x)=e^{it\lambda (x)}\phi (x),\ t\in {\mathbb R} ,\, x\in X.
\end{equation}
Here $\lambda:\ X\to R$ is a measurable real-valued function, {and}
$\phi \in L_2(X,\mu )$.

When $\mu$ is a continuous measure, the system represents a field of harmonic oscillators.
When $\mu$ is a discrete (point) measure, the system reduces to a countable collection of harmonic oscillators.

In this work, we focus on the special case of SHO where $X = \mathbb{R}$. This choice corresponds to systems with simple spectrum of the generator of the unitary group $\mathbf{V}$. For such systems, there exists a canonical triplet $(X, \mu, \mathbf{V})$ with $X = \mathbb{R}$ and unitary evolution given by
\begin{equation*}
\mathbf{V}(t)u(x) = e^{itx}u(x), \quad t, x \in \mathbb{R}
\end{equation*}
\cite[Chapter 7.2]{RS}.

We remark that for general SHO systems, the phase space admits an orthogonal decomposition into invariant subspaces under the SHO flow. Furthermore, the restriction of the flow to each invariant subspace yields an SHO system with simple spectrum.

According to the Theorem 1.14 {\cite{RS}},
any Borel countably additive measure $\mu$ on $\mathbb{R}$ admits a unique decomposition into three mutually singular components:
\begin{equation*}
\mu = \mu_p + \mu_{ac} + \mu_s,
\end{equation*}
where:
\begin{itemize}
    \item $\mu_p$ is a \emph{point measure} supported on a finite or countable subset of $\mathbb{R}$;
    
    \item $\mu_{ac}$ is an \emph{absolutely continuous measure} with density in $L^1_{loc}(\mathbb{R})$ with respect to Lebesgue measure;
    
    \item $\mu_s$ is a \emph{singular continuous measure} (i.e., there exists a Borel set $\Sigma \subset \mathbb{R}$ with Lebesgue measure zero such that $\mu_s(\mathbb{R}\setminus\Sigma) = 0$).
\end{itemize}

Therefore, any system of harmonic oscillators (SHO) admits a decomposition into three distinct components:
\begin{itemize}
    \item A \emph{point} component (finite or countable)
    \item An \emph{absolutely continuous} component
    \item A \emph{singular} component
\end{itemize}

In this work, we investigate the dynamical properties of SHO systems corresponding to each measure type: point, absolutely continuous, and singular. Furthermore, we systematically compare these properties across different measure types, highlighting their distinctive features.

The SHO system \eqref{eq1} constitutes a Hamiltonian system $(E, \omega, h)$, where
$E$ is a real Hilbert space,
$\omega$ is a shift-invariant symplectic form on $E$,
$h$ is the Hamiltonian function (see Appendix).

The space $E$ is obtained through the realization mapping $\mathcal{R}: \mathcal{H} \to E$ of the complex Hilbert space $\mathcal{H} = L^2(X,\mu,\mathbb{C})$, which admits the decomposition:
\begin{equation*}
\mathcal{H} = \text{Re}(\mathcal{H}) \oplus \text{Im}(\mathcal{H}) = L^2(X,\mu,\mathbb{R}) \oplus L^2(X,\mu,\mathbb{R})
\end{equation*}
(see Appendix). The realization map $\mathcal{R}$ is explicitly given by:
\begin{equation}\label{ove}
\mathcal{R}(\phi) = (q, p), \quad \phi \in \mathcal{H},
\end{equation}
where $q(x) = \text{Re}\,\phi(x)$ and $p(x) = \text{Im}\,\phi(x)$ for all $x \in X$.

The symplectic form $\omega$ is a non-degenerate skew-symmetric bilinear form on $E$. The symplectic operator $\mathbf{J}$ is induced by the complex structure of $\mathcal{H}$ through the relation:
\begin{equation*}
\mathbf{J}(\mathcal{R}(\phi)) = \mathcal{R}(i\phi), \quad \phi \in \mathcal{H}.
\end{equation*}
Consequently, the symplectic form can be expressed as:
\begin{equation*}
\omega(\mathcal{R}(\phi), \mathcal{R}(\psi)) = (\mathbf{J}\mathcal{R}(\phi), \mathcal{R}(\psi))_E = \frac{i}{2}\left[(\phi,\psi)_{\mathcal{H}} - (\psi,\phi)_{\mathcal{H}}\right] \quad \forall \phi,\psi \in \mathcal{H}.
\end{equation*}

The Hamiltonian function $h: E \supset E_1 \to \mathbb{R}$, associated with the frequency function $\lambda$ in the SHO definition, is given by:
\begin{equation}\label{H}
h(p,q) = \frac{1}{2}\int_X \lambda(x)(p(x)^2 + q(x)^2) d\mu(x), \quad (p,q) \in E.
\end{equation}

The space $\mathcal{H} = L^2(X,\mu,\mathbb{C})$ (along with its realization $E$) constitutes the phase space of the SHO. The unitary group $\mathbf{V}$ acting on $\mathcal{H}$ induces a phase flow $\Phi$ on the realization space $E$ through the representation:
\begin{equation}\label{P}
\Phi_t = \mathcal{R} \mathbf{V}_t \mathcal{R}^{-1}, \quad t \in \mathbb{R}.
\end{equation}

The phase flow $\Phi$ forms a one-parameter group of orthogonal transformations on $E$ such that each two-dimensional symplectic subspace of $E$ remains invariant under the flow.
In coordinates relative to the symplectic basis, the flow $\Phi$ acts by the rule
$\Phi_t(\mathcal{R}(\phi)) = \mathcal{R}(\mathbf{V}_t\phi).$

Given an SHO \eqref{eq1} and an initial state $\phi \in \mathcal{H}$, the system generates a trajectory $\Gamma_{\phi}$ in phase space defined by:
\begin{equation}\label{3}
\Gamma_{\phi} = \{e^{it\lambda(x)}\phi(x) \mid t \in \mathbb{R}\} \subset \mathcal{H}.
\end{equation}

The study of conservation laws and integrability for linear systems in Hilbert spaces with additional positive integrals has been investigated in \cite{TSh, K22}, particularly focusing on their implications for SHO dynamics.

For the SHO system, we obtain the following structure of conserved quantities. The Hamiltonian system \eqref{H} possesses a complete set of first integrals $I_x: E \to \mathbb{R}$ defined by
\begin{equation*}
I_x(q,p) = \frac{1}{2}\lambda(x)(p(x)^2 + q(x)^2), \quad x \in X.
\end{equation*}

These first integrals determine invariant manifolds of the Hamiltonian flow \eqref{P}, which can be parameterized by $\mu$-measurable positive functions $r: X \to (0,+\infty)$ through the level sets
\begin{equation*}
\mathbb{T}_r = \{(q,p) \in E \mid I_x(q,p) = r(x) \text{ for all } x \in X\}.
\end{equation*}

Each above manifold $\mathbb{T}_r$ represents an invariant torus of the Hamiltonian system \eqref{H}.

For every nonnegative $\mu$-measurable function $r \in L^2(X,\mu)$, the invariant torus $\mathbb{T}_r$ of the SHO system \eqref{eq1} is the following set:
\begin{equation}\label{tor}
\mathbb{T}_r = \{ u \in L^2(X,\mu,\mathbb{C}) \mid \lvert u(x)\rvert = r(x) \text{ for } \mu\text{-almost every } x \in X \}.
\end{equation}

Systems of harmonic oscillators and their associated invariant tori in Hilbert spaces naturally emerge in the study of quantum systems. A quantum system is formally defined as a pair $(\mathcal{H}, \mathbf{U})$, where:
\begin{itemize}
    \item $\mathcal{H}$ is a separable complex Hilbert space,
    \item $\mathbf{U} = \{ \mathbf{U}(t) \}_{t \in \mathbb{R}}$ is a strongly continuous one-parameter unitary group on $\mathcal{H}$.
\end{itemize}

The unitary group $\mathbf{U}$ admits a unique generator $\mathbf{L}$, which is a self-adjoint operator on $\mathcal{H}$, yielding the spectral representation:
$
\mathbf{U}(t) = e^{-it\mathbf{L}}, \quad t \in \mathbb{R}.
$
The system's dynamics are governed by the Schr$\ddot{\rm o}$dinger equation:
$
i \frac{d}{dt} \psi(t) = \mathbf{L} \psi(t).
$

For finite-dimensional Hilbert spaces, the equivalence between the Schr$\ddot{\rm o}$dinger equation and the dynamics of oscillator systems is well-established \cite{MGI}. In the infinite-dimensional case, we obtain the following result through application of the spectral theorem:

\begin{theorem}\cite{Volovich19}.
For any quantum system $({\mathcal H},{\bf U})$, there exists:
\begin{itemize}
    \item A unitary operator ${\bf W}: {\mathcal H}\to L^2(X,\mu)$
    \item A measurable function $\lambda: X\to \mathbb R$
\end{itemize}
such that:
\begin{enumerate}
    \item The Schr$\ddot{\rm o}$dinger equation $i\frac{d}{dt}\psi = {\bf L}\psi$ is unitarily equivalent to the harmonic oscillator system:
    \begin{equation}
    i\frac{\partial}{\partial t}\phi(t,x) = \lambda(x)\phi(t,x), \quad (x,t) \in X\times\mathbb{R}
    \end{equation}
    
    \item The unitary equivalence is implemented by the intertwining relation:
    \begin{equation}
    {\bf W}{\bf U}(t) = {\bf V}_t{\bf W}, \quad t\in\mathbb{R}
    \end{equation}
\end{enumerate}
Consequently, every quantum system is completely integrable, with an infinite family of integrals of motion explicitly determined by this construction.
\end{theorem}

\section{A system of oscillators with a 
measure of a general type}
This section investigates three fundamental characteristics of linear flows on invariant tori such as Periodicity of trajectories, Ergodicity (topological transitivity) and Wandering/non-wandering behavior of points.
We establish conditions for these properties in terms of the characteristic function of the SHO measure and intrinsic properties of the linear flow.

Let $\mathrm{ca}(\mathbb{R})$ denote the Banach space of complex-valued countably additive Borel measures with bounded variation on $(\mathbb{R}, \mathcal{B})$, where $\mathcal{B}$ is the $\sigma$-algebra of  Borel subsets of $\mathbb{R}$. The norm on $\mathrm{ca}(\mathbb{R})$ is given by the total variation of a measure. For any $\mu \in \mathrm{ca}(\mathbb{R})$, we denote by $\hat{\mu}(y)$, $y \in \mathbb{R}$, its Fourier transform.

\begin{definition}
A point $u \in H$ is called \emph{wandering} for the flow $\mathbf{\Phi}$ if there exist:
\begin{itemize}
    \item A neighborhood $O(u) \subset H$ of $u$
    \item A time $T > 0$
\end{itemize}
such that for all $t > T$:
\begin{equation*}
\Phi_t(O(u)) \cap O(u) = \emptyset.
\end{equation*}
\end{definition}

\begin{lemma}\label{le1}
Let $\mu$ be a non-negative measure in $\mathrm{ca}(\mathbb{R})$. If 
\begin{equation}\label{cond}
{\overline {\lim\limits_{\xi \to \infty }}} \lvert \hat{\mu}(\xi)\rvert < \|\mu\|,
\end{equation}
then:
\begin{enumerate}
    \item There exists a non-negative function $u \in H = L^2(\mathbb{R}, \mathcal{B}, \mu, \mathbb{C})$ such that the linear flow $\mathbf{\Phi}$ has no dense trajectory $\Gamma_u = \{\mathbf{\Phi}_t u \mid t \in \mathbb{R}\}$ in the torus $\mathbb{T}_u$;
    
    \item Every point $v \in \mathcal{H}$ of the invariant torus $\mathbb{T}_{\vert u\vert }$ is wandering for the flow 
    \begin{equation}
    (\mathbf{\Phi}_t v)(x) = e^{itx}v(x), \quad x \in \mathbb{R}, \ t \in \mathbb{R}, \ v \in H.
    \end{equation}
\end{enumerate}
\end{lemma}

\begin{proof}
Consider the trajectory defined by the flow:
\begin{equation}
(\Phi_t u)(x) = e^{itx}u(x), \quad t \in \mathbb{R}, \ x \in \mathbb{R},
\end{equation}
with initial condition $u \in H$.

Let $u \in H$ be a function satisfying $\vert u(x)\vert  = 1$ for $\mu$-almost every $x \in \mathbb{R}$. Such functions exist in $H$ since $\mu$ has finite variation. For instance, the constant function $1(x) \equiv 1$ belongs to $H = L^2(\mathbb{R}, \mathcal{B}, \mu, \mathbb{C})$. 
Consequently, we may define the invariant torus:
\begin{equation*}
\mathbb{T}_u = \{ u \in L^2(\mathbb{R}, \mathcal{B}, \mu, \mathbb{C}) :\  \vert u(x)\vert  = 1 \ \mu\text{-a.e.}\}
\end{equation*}
where $\mu\text{-a.e.}$ denotes $\mu$-almost everywhere.

First, we prove that the assumptions of Lemma~\ref{le1} on the Fourier transform $\hat{\mu}$ imply the following dynamical property:
\begin{equation}\label{vne}
\exists \delta = \delta(\mu) > 0,\ \exists T > 0 : \Phi_t u \notin O_\delta(u) \quad \forall t \in (-\infty,-T) \cup (T,+\infty),
\end{equation}
where $O_\delta(u)$ denotes the $\delta$-neighborhood of $u$ in $H$.

We have
$$\|{\bf {\Phi}}(t)u-u\|_H^2=\int\limits_{\mathbb R}\lvert (e^{itx}u(x)-u(x)\rvert ^2d\mu (x)=$$ $$=\int\limits_{\mathbb R}[2-e^{itx}-e^{-itx}]\lvert u(x)\rvert ^2d\mu (x)=2\| \mu \|-\hat \mu (t)-\hat \mu (-t).$$ 
Since ${\overline {\lim\limits_{\xi \to \infty }}}\lvert \hat \mu (\xi )\rvert <\| \mu \|$, there are numbers $\delta _{\mu }={\sqrt {
\| \mu \|-\lvert {\overline {\lim\limits_{t\to \infty } }} \mu (t)\rvert }}>0$ and $T>0$ such that $2\| \mu \|-\hat \mu (t)-\hat \mu (-t)>\delta _{\mu}^2$ for any $t:\, \vert t\vert >T$. Thus, the statement (\ref{vne}) is proved.

We now establish that $u$ is a wandering point for the flow $\mathbf{\Phi}$. Since each mapping $\mathbf{\Phi}_t$ is unitary for all $t \in \mathbb{R}$, it preserves neighborhoods:
$
\mathbf{\Phi}_t(O_a(u)) = O_a(\mathbf{\Phi}_t u) \quad \text{for any } a > 0 \text{ and } t \in \mathbb{R}.
$
From condition~\eqref{vne}, we conclude that for any $a \in (0, \delta/2)$ and $\vert t\vert  > T$:
$
\mathbf{\Phi}_t(O_a(u)) \cap O_a(u) = \emptyset.
$
Hence,  $u$ is wandering point of the  flow ${\bf \Phi}$.

Finally, 
we complete the proof by showing that the trajectory $\Gamma_u = \{\mathbf{\Phi}_t u \mid t \in \mathbb{R}\}$ is not dense in the torus $\mathbb{T}_u$. 

Assume, by contradiction, that $\Gamma_u$ is dense in $\mathbb{T}_u$. Then, in particular, $\Gamma_u$ would be dense in the intersection $O_\delta(u) \cap \mathbb{T}_u$, where $\delta > 0$ is the constant from condition~\eqref{vne}.

However, this contradicts the wandering property established in~\eqref{vne}, which guarantees the existence of $T > 0$ such that:
\begin{equation*}
\Gamma_u \cap O_\delta(u) = \emptyset \quad \text{for all } |t| > T.
\end{equation*}

By condition~\eqref{vne}, the truncated trajectory $\Gamma_u^T = \{\mathbf{\Phi}_t u \mid t \in [-T,T]\}$ would need to be dense in $O_\delta(u) \cap \mathbb{T}_u$. 

Observe that the set $\Gamma_u^T$ is compact in $H$ as the continuous image of the compact interval $[-T,T]$ under the flow map $t \mapsto \mathbf{\Phi}_t u$. Note that $O_\delta(u) \cap \mathbb{T}_u$ is relatively open in $\mathbb{T}_u$.

This leads to a contradiction: a compact set $\Gamma_u^T$ cannot be dense in the open set $O_\delta(u) \cap \mathbb{T}_u$ unless $\mathbb{T}_u$ itself is compact, which is not the case for infinite-dimensional tori. The density assumption must therefore be false.
\end{proof}

It is obvious that  following two statements are
true.

\begin{lemma}\label{le2}
Let $\mu$ be a countably additive non-negative Borel measure on $\mathbb{R}$. For any $u \in L^2(\mathbb{R}, \mu, \mathbb{C})$, the set function $\mu_u$ defined by
\begin{equation*}
\mu_u(A) = \int_A \vert u(x)\vert ^2 d\mu(x) \quad \text{for all Borel sets } A \subset \mathbb{R}
\end{equation*}
is a countably additive non-negative Borel measure.
\end{lemma}

\begin{lemma}\label{le3}
Let $\mu$ be a countably additive non-negative Borel measure on $\mathbb{R}$ and $u \in L^2(\mathbb{R}, \mu, \mathbb{C})$. Then for $v \in L^2(\mathbb{R}, \mu, \mathbb{C})$, the following are equivalent:
\begin{enumerate}
    \item $v \in \mathbb{T}_u$ (i.e., $\vert v(x)\vert = \lvert u(x)\rvert $ for $\mu$-a.e. $x\in\mathbb{R}$)
    \item There exists $\phi \in L^\infty(\mathbb{R}, \mu_u, \mathbb{C})$ with $\lvert \phi(x)\rvert = 1$ $\mu_u$-a.e. such that $v(x) = \phi(x)u(x)$ $\mu$-a.e.
\end{enumerate}
where $\mu_u$ is the measure $d\mu_u = \vert u\vert ^2 d\mu$ from Lemma~\ref{le2}.
\end{lemma}

\begin{theorem}\label{te1}
Let $\mu$ be a countably additive Borel non-negative measure of bounded variation on $\mathbb{R}$ and $u \in L^2(\mathbb{R}, \mu, \mathbb{C})$. If 
\begin{equation}\label{limcond}
{\overline {\lim\limits_{\xi \to \infty }}}\lvert \hat \mu _u (\xi )\rvert  < \|\mu_u\|,
\end{equation}
where $\mu_u = \vert u\vert^2\mu$, then:
\begin{enumerate}
    \item The restricted dynamical system $\mathbf{\Phi}\vert _{\mathbb{T}_u}$ is not topologically transitive;
    
    \item Every point $v \in \mathbb{T}_u$ is wandering for $\mathbf{\Phi}\vert _{\mathbb{T}_u}$.
\end{enumerate}
\end{theorem}

\begin{proof}[Proof of Theorem~\ref{te1}]
The argument combines the results of Lemmas~\ref{le1}, \ref{le2}, and \ref{le3}:

\begin{enumerate}
    \item By Lemma~\ref{le2}, the set function $\mu_u = \vert u\vert ^2\mu$ defines a countably additive Borel measure. The condition \eqref{limcond} shows that $\mu_u$ satisfies the hypotheses of Lemma~\ref{le1}.

    \item For any $v \in \mathbb{T}_u$, Lemma~\ref{le3} guarantees the existence of a measurable function $\phi : \mathbb{R} \to \mathbb{C}$ with $\vert \phi(x)\vert = 1$ $\mu_u$-a.e. such that $v(x) = \phi(x)\vert u(x)\vert $ $\mu$-a.e.

    \item Since $\mu_u$ has bounded variation and $\vert \phi\vert  \equiv 1$, we have $\phi \in L^2(\mathbb{R}, \mathcal{B}, \mu_u, \mathbb{C})$.

    \item Lemma~\ref{le1} then applies to show that the restricted flow $\mathbf{\Phi}\vert _{\mathbb{T}_u}$ is not transitive and  
every $v \in \mathbb{T}_u$ is wandering.
\end{enumerate}
\end{proof}

\begin{theorem}\label{te2}
Let $\mu$ be a countably additive Borel non-negative measure of bounded variation on $\mathbb{R}$. Assume there exists $\sigma \in (0,1)$ such that for every Borel set $A \subset \mathbb{R}$, the Fourier transform of the restricted measure satisfies:
\begin{equation}\label{sigma-cond}
{\overline {\lim\limits_{\xi \to \infty }}}
\vert \widehat{\mu\vert _A}(\xi)\vert  \leq (1-\sigma)\|\mu\vert _A\|.
\end{equation}
Then the linear flow $\mathbf{\Phi}$ exhibits the following non-transitivity properties:
\begin{enumerate}
    \item For any $u \in H = L^2(\mathbb{R}, \mathcal{B}, \mu, \mathbb{C})$ with $\vert u(x)\vert  > 0$ $\mu$-a.e., the trajectory $\Gamma_u = \{\mathbf{\Phi}_t u : t \in \mathbb{R}\}$ is not dense in the torus $\mathbb{T}_u$;
    \item Every point $v \in \mathbb{T}_{\vert u\vert }$ is wandering for the flow defined by:
    \begin{equation*}
    (\mathbf{\Phi}_t v)(x) = e^{itx}v(x), \quad x \in \mathbb{R}, \ t \in \mathbb{R}.
    \end{equation*}
\end{enumerate}
\end{theorem}

\begin{proof}[Proof of Theorem~\ref{te2}]
The result for constant amplitude functions follows from Lemma~\ref{le1}. For $u(x) = c$ (constant), we derive explicit parameters:
\begin{equation*}\label{delta-explicit}
\delta_{\mu,c} = c\sqrt{\|\mu\| - \varlimsup_{t\to\infty} \vert \hat{\mu}(t)\vert} > 0
\end{equation*}
such that for some $T > 0$, the flow satisfies:
\begin{equation}\label{vnec}
\mathbf{\Phi}_t u \notin O_{\delta_{\mu,c}}(u) \quad \forall t \in (-\infty,-T) \cup (T,+\infty).
\end{equation}

This establishes both the non-density of trajectories and the wandering property in this special case.

Consider $u$ to be a simple $\mu$-measurable function of the form:
\begin{equation}
u = \sum_{j=1}^N c_j \chi_{A_j},
\end{equation}
where $A_1,\ldots,A_N \in \mathcal{B}$ are Borel sets and $c_1,\ldots,c_N \in \mathbb{C}$.

Since the flow $\mathbf{\Phi}$ preserves the support of $u$, we can analyze the evolution component-wise:
\begin{equation}\label{flow-decomp}
\|\mathbf{\Phi}_t u - u\|_H^2 = \sum_{j=1}^N \|(\mathbf{\Phi}_t u - u)\vert _{A_j}\|_H^2 = \sum_{j=1}^N \|\mathbf{\Phi}_t u\vert _{A_j} - u\vert _{A_j}\|_H^2.
\end{equation}
This decomposition allows us to study the dynamics on each invariant component $A_j$ separately.

Consequently, there exists a time threshold $T = T(u) > 0$ such that for all $\vert t\vert > T$, the following norm estimate holds:
\begin{equation}\label{norm-est}
\|\mathbf{\Phi}_t u - u\|_H^2 \geq \sigma\sum_{j=1}^N \vert c_j\vert ^2 \mu(A_j) = \sigma\|u\|_H^2,
\end{equation}
where $\sigma \in (0,1)$ is the spectral  parameter from the condition (\ref{sigma-cond}).

Let $v\in H$. Then there is a simple function $u\in H$ such that 
\begin{equation}\label{approx}
\|u - v\|_H \leq \tfrac{1}{4}\sqrt{\sigma}\|v\|_H.
\end{equation}
This implies the norm bound:
\begin{equation}\label{appr2}
\|u\|_H \geq \|v\|_H - \|v - u\|_H \geq (1 - \tfrac{1}{4}\sqrt{\sigma})\|v\|_H.
\end{equation}
Applying the estimate \eqref{norm-est} to $u$, there exists $T = T(u) > 0$ such that for all $\vert t\vert > T$ we have according to (\ref{approx}) and (\ref{appr2}):
\begin{align*}
\|\mathbf{\Phi}_t v - v\|_H^2 &\geq \left(\|\mathbf{\Phi}_t u - u\|_H - \|\mathbf{\Phi}_t(v - u)\|_H - \|v - u\|_H\right)^2 \\
&\geq \left(\sqrt{\sigma}\|u\|_H - 2\cdot\tfrac{1}{4}\sqrt{\sigma}\|v\|_H\right)^2 \geq \\ &\left(\sqrt{\sigma}(1 - \tfrac{1}{4}\sqrt{\sigma}) - \tfrac{1}{2}\sqrt{\sigma}\right)^2 \|v\|_H^2 = \tfrac{\sigma}{16}\|v\|_H^2.
\end{align*}
Therefore, any point $v\in H,\, v\neq 0,$ is wandering point of the flow $\Phi $.
\end{proof}

\begin{corollary}\label{co1}
If there exists a transitive trajectory $\Gamma_u = \{\mathbf{\Phi}_t u \mid t \in \mathbb{R}\}$ of the SHO, then there must exist a Borel set $A \subset \mathbb{R}$ with $\mu\vert _A \neq 0$ such that:
\begin{equation*}
{\overline {\lim\limits_{t\to \infty }}}\lvert {\widehat{ \mu \vert _A}}(t)\rvert =\|\mu \vert _{A}\|>0
\end{equation*}
\end{corollary}

\begin{remark} Let $\mu$ be a point measure, $\mu =\sum\limits_{k}p_k\delta _{\lambda _k}$, where $\{p_k\}\in \ell _1$.
Hence, $\hat \mu (t)=\sum\limits_{k}p_ke^{-i\lambda _kt},\ t\in \mathbb R$. Therefore, ${\overline {\lim\limits_{t\to \infty }}}\lvert {\hat{ \mu }}(t)\rvert \leq \| \{p_k\} \|_{\ell _1}=\|\mu \|$. Thus, the the system may exhibit transitive trajectories in the case ${\overline {\lim\limits_{t\to \infty }}}\lvert {\hat{ \mu }}(t)\rvert = \| \{p_k\} \|_{\ell _1}$.
\end{remark}

\section{Bernoulli measures as an example of singular measures}

We construct examples of singular measures $\mu$ on $\mathbb{R}$ for which the SHO $(\mathbb{R}, \mu, \mathbf{V}_{e^{itx}})$ has no transitive trajectories and no non-wandering points on any non-degenerate invariant torus.
Consider the family of Bernoulli measures $\{\mu_\eta\}_{\eta\in(0,1)}$ on $\mathbb{R}$. Then each $\mu_\eta$ is a  compactly supported Borel probability measure.
There are $\eta  \in (0,1)$ such that $\mu_\eta$ is a purely singular continuous measure.
The Bernoulli measures $\{\mu_\eta\}_{\eta\in(0,1)}$ and their associated $L^2(\mu_\eta)$ spaces are well-studied \cite{Jess, Jorgen, Solom}. These measures exhibit the following characteristics:

\begin{enumerate}
    \item Special Cases:
    \begin{itemize}
        \item $\mu_{1/3}$ is the canonical probability measure on the standard Cantor set
        \item $\mu_{1/2}$ coincides with (a multiple of) Lebesgue measure on $[-1,1]$
    \end{itemize}

    \item Fourier Representation:
    \begin{equation}\label{Four-Bern}
    \hat{\mu}_\eta(t) = \prod_{k=1}^\infty \cos(2\pi t\eta^k), \quad t \in \mathbb{R}
    \end{equation}

    \item Support Properties:
    \begin{itemize}
        \item For $\eta \in (0,1)$, $\mu_\eta$ is a Borel probability measure with the support:
        \begin{equation*}
        {\rm supp}(\mu_\eta) = K_\eta \subset \left[-\frac{\eta}{1-\eta}, \frac{\eta}{1-\eta}\right]
        \end{equation*}
    \end{itemize}
\end{enumerate}

Thus, Bernoulli measures provide explicit examples of singular continuous measures with varying support properties.



For any fixed $\eta \in (0,1)$ and $t > \frac{1}{2\eta}$, define:
\begin{equation*}
k_1(t) = \min\{k \in \mathbb{N} \mid 2\pi t\eta^k \geq \pi\} .
\end{equation*}
This index satisfies the following estimates:
\begin{enumerate}
    \item $\pi \eta \leq 2\pi t\eta^{k_1(t)+1} < \pi ;$
    \item $\pi \eta^2 \leq 2\pi t\eta^{k_1(t)+2} < \pi \eta .$
\end{enumerate}
Consequently, by the strict monotonicity of cosine on $[0,\pi]$:
\begin{equation}\label{mon-cos}
\cos(\pi\eta) < \cos(2\pi t\eta^{k_1(t)+2}) < \cos(\pi\eta^2)\quad \forall \ \eta \in (0,1),\ \forall \ t>{1\over {2\eta }}.
\end{equation}

Thus, we prove, that for any $t>{1\over {2\eta }}$ there is a number $k_*\in \mathbb N$ such that $$\cos ({{\pi }{\eta }})<\cos (2\pi t\eta ^{k_*})<\cos (\pi \eta ^2).$$
The equality (\ref{Four-Bern}) and inequality (\ref{mon-cos}) imply the condition
$$
\forall \ t>{1\over {2\eta }}\ \exists \ k_*\in {\mathbb N}:\ 
\lvert \hat\mu _{\eta }(t)\rvert \leq \lvert \cos (2\pi t\eta ^{k_*})\rvert \leq \min \{ \lvert \cos (\pi \eta ^2)\rvert ,\, \lvert \cos (\pi \eta )\rvert <1,
$$
hence, ${\overline {\lim\limits_{t\to \infty }}}\lvert \hat \mu _{\eta }(t)\rvert <1$.
Therefore, from the Theorem \ref{te1} we obtain the following statement.

\begin{theorem}\label{te5}
For every Bernoulli parameter $\eta \in (0,1)$, there exists a dense set of functions $u \in L^2(\mathbb{R}, \mu_\eta, \mathbb{C})$ with the following properties:
\begin{enumerate}
    \item Every point $z \in \mathbb{T}_u$ is wandering for the linear flow $\mathbf{\Phi}$
    
    \item No trajectory $\Gamma_z = \{\mathbf{\Phi}_t z \mid t \in \mathbb{R}\}$ is dense in $\mathbb{T}_u$
\end{enumerate}
where $\mu_\eta$ is the Bernoulli measure with parameter $\eta$.
\end{theorem}


\section{Dynamics of
a SHO with a point measure
}\label{Dyn-on-inf-dim-tori}
In this chapter, we investigate the dynamics of systems of harmonic oscillators (SHOs) in the special case where the measure $\mu$ in the triplet $(X,\mu,\mathbf{V})$ (see equation \eqref{eq1}) is a point measure.

We focus on SHOs satisfying:
\begin{itemize}
    \item $X = \mathbb{N}$ (the set of natural numbers)
    \item $\mu$ is the counting measure on $\mathbb{N}$
\end{itemize}

In this setting, the Hilbert space becomes:
\begin{equation}
\mathcal{H} = L^2(\mathbb{N},\mu,\mathbb{C}) = \ell^2(\mathbb{C})
\end{equation}

A non-degenerate invariant torus $\mathbb{T}_r$ (as defined in \eqref{tor}) is determined by a sequence of positive numbers $r = \{r_n\}_{n=1}^\infty \in \ell^2$, and takes the form:
\begin{equation}
\mathbb{T}_r = \left\{ \{z_n\}_{n=1}^\infty \in \ell^2 \mid \vert z_n\vert = r_n \text{ for all } n \in \mathbb{N} \right\}
\end{equation}

In this chapter, we examine the dynamics of systems of harmonic oscillators (SHOs) in the discrete case where the measure $\mu$ in the SHO triplet $(X, \mu, \mathbf{V})$ from Definition~\eqref{eq1} is a point measure.

We specifically study SHOs satisfying:
\begin{itemize}
    \item The parameter space $X = \mathbb{N}$ (the set of natural numbers)
    \item The measure $\mu$ is the counting measure on $\mathbb{N}$
\end{itemize}

In this discrete setting, the phase space becomes:
\begin{equation*}
\mathcal{H} = L^2(\mathbb{N}, \mu, \mathbb{C}) \cong \ell^2(\mathbb{C})
\end{equation*}

A non-degenerate invariant torus $\mathbb{T}_r$ (as defined in \eqref{tor}) is parameterized by a positive, square-summable sequence $r = \{r_n\}_{n=1}^\infty \in \ell^2$. This torus is the following set:
\begin{equation}\label{tn}
\mathbb{T}_r = \left\{ z = \{z_n\}_{n=1}^\infty \in \ell^2(\mathbb{C}) \mid \vert z_n\vert  = r_n \text{ for all } n \in \mathbb{N} \right\}
\end{equation}

The realization map $\mathcal{R}$ defined in \eqref{ove} establishes an isomorphism between the complex Hilbert space $\mathcal{H}$ and its real counterpart $E = Q \oplus P$, where:
\begin{itemize}
    \item $Q = P = L^2(\mathbb{N}, \mu, \mathbb{R})$ are real Hilbert spaces
    \item For each $u = \{u_n\}_{n=1}^\infty \in \mathcal{H} = L^2(\mathbb{N}, \mu, \mathbb{C})$, the map yields:
    \begin{equation*}
    \mathcal{R}(u) = (q,p) \quad \text{where} \quad q_n = \operatorname{Re}(u_n), \ p_n = \operatorname{Im}(u_n) \ \forall n \in \mathbb{N}
    \end{equation*}
\end{itemize}

Under this realization, the invariant torus $\mathbb{T}_r$ from \eqref{tn} corresponds to the following subset of $E$:
\begin{equation}\label{tnr}
\mathcal{R}(\mathbb{T}_r) = \left\{ (q,p) \in E \mid q_n^2 + p_n^2 = r_n^2 \ \forall n \in \mathbb{N} \right\}
\end{equation}

A realization ${\mathcal R}$ (\ref{ove}) of the complex Hilbert space $\mathcal H$ mappings a point
$u\in L_2({\mathbb N},\mu, {\mathbb C}))$ to the point  $(q,p)\in E=Q\oplus P$, where  $Q=P=L_2({\mathbb N},\mu ,{\mathbb R})$, and the following system of equalities are satisfied
$u_n=q_n+ip_n \ \forall \ n\in {\mathbb N}$. The realization of the torus  (\ref{tn}) is the  set (\ref{tnr}) in the space $E$.

An infinite-dimensional torus (\ref{tnr}) in the Hilbert space $E=Q\oplus P$ is the countable Cartesian product of circles
\begin{equation}\label{c2}
\mathbb T=C_1\times C_2\times.... \subset E.
\end{equation}
Here $C_k=\{ p_k^2+q_k^2=r_k^2\} $ is the circle in the two-dimensional {plane}
$E_k={\rm span }(f_k,g_k)$. Since $\mathbb T\subset E$,
\begin{equation}\label{32}
\sum\limits_{k=1}^{\infty }r_k^2<+\infty .
\end{equation}

The Hamilton function
\begin{equation}\label{HNI}
H(q,p)=\sum\limits_{k=1}^{\infty }{{\lambda _k}\over 2}(p_k^2+q_k^2), \quad (q,p)\in E_1,
\end{equation}
where $\{ \lambda _k\}$ be a sequence of positive numbers and $E_1=\{ (q,p)\in E:\ \sum\limits_{k=1}^{\infty }\lambda _k(q_k^2+p_k^2)<+\infty \}$, defines
{a} flow on the torus (\ref{tnr}) in the space
$E$ by the following equalities
\begin{equation}\label{Z}
z_k(t)=({\bf V}_tz_{0})_k=e^{i\lambda _kt}z_{k0},\ t\in \mathbb R;\quad z_k=q_k+ip_k.
\end{equation}

This Hamiltonian flow should be considered firstly in the subspace
$E_2=\{ x\in E:\ \sum\limits_{k=1}^{\infty}\lambda _k^2(q_k^2+p_k^2)<\infty \}$ of the space  $E=l_2$. The reason is the following. The tangent vectors of the flow is defined by the gradient of the Hamilton function (\ref{HNI}) and this function is differentiable  on the subspace $E_2$ in its domain $E_1$ (see Appendix).

Let us study {the} existence of periodic trajectories
$\Gamma _{z_0}=\{ z(t),\ t\in \mathbb R \}$, of the Hamiltonian system (\ref{HNI}) on the torus $\mathbb T$.

\begin{theorem}\label{t1} \cite{VS24}
{\it Let the sequence $\{ \lambda _k\}$ is rationally commensurable, i.e. there is a finite
collection of numbers $\lambda _{j_1},...,\lambda _{j_m}$ such that one of them is a linear combination, {with rational coefficients, of the others}.
Let the sequence $\{ {1\over {\lambda _k}} \}$ {be} unbounded. Then for any non-{degenerate} torus $\mathbb T$ every trajectory of Hamiltonian system (\ref{HNI}) is neither periodic, nor dense in the torus $\mathbb T$. }
\end{theorem}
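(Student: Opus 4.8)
The plan is to prove the two assertions separately, handling periodicity by a short direct argument that uses only the unboundedness of $\{1/\lambda_k\}$, and handling non-density by projecting onto the finitely many resonant coordinates and invoking the finite-dimensional Jacobi--Weyl--Kronecker criterion (Theorem \ref{Yak}).

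First, for periodicity, suppose a trajectory $z(t)=(e^{i\lambda_k t}z_{k0})_k$ given by (\ref{Z}) were periodic with some period $T>0$. Since the flow is a one-parameter group, periodicity with period $T$ is equivalent to $z(T)=z(0)$, i.e. $e^{i\lambda_k T}z_{k0}=z_{k0}$ for every $k$. On a non-degenerate torus each $z_{k0}\neq 0$ (indeed $|z_{k0}|=r_k>0$), so this forces $e^{i\lambda_k T}=1$, hence $\lambda_k T\in 2\pi\mathbb Z$. As $\lambda_k>0$ this means $\lambda_k T=2\pi n_k$ with $n_k\in\mathbb N$, so $\lambda_k\geq 2\pi/T$ and $1/\lambda_k\leq T/(2\pi)$ for every $k$. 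This bounds $\{1/\lambda_k\}$, contradicting the hypothesis. Hence no trajectory is periodic; notice that this step does not even use rational commensurability.

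For the non-density assertion I would extract from rational commensurability a genuine resonance relation. By hypothesis there is a finite set $\lambda_{j_1},\dots,\lambda_{j_m}$ with $m\geq 2$ and rationals $q_2,\dots,q_m$ such that $\lambda_{j_1}=\sum_{i=2}^m q_i\lambda_{j_i}$; clearing denominators yields integers $n_{j_1},\dots,n_{j_m}$, not all zero, with $\sum_{i=1}^m n_{j_i}\lambda_{j_i}=0$, and since $\lambda_{j_1}>0$ at least two of these integers are nonzero. Let $\pi:\ell_2\to\mathbb C^m$ be the bounded (hence continuous) coordinate projection onto the indices $j_1,\dots,j_m$; it maps $\mathbb T$ onto the finite torus $C_{j_1}\times\cdots\times C_{j_m}$ and carries the trajectory to the linear flow with frequencies $\lambda_{j_1},\dots,\lambda_{j_m}$. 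The relation $\sum_i n_{j_i}\lambda_{j_i}=0$ is precisely the resonance that violates (\ref{10}), so by the ``only if'' direction of Theorem \ref{Yak}(2) the projected trajectory is not dense in $C_{j_1}\times\cdots\times C_{j_m}$; equivalently it stays inside the proper closed subset $\{\sum_i n_{j_i}\theta_{j_i}=\mathrm{const}\}$, where $\theta_{j_i}(t)=\lambda_{j_i}t+\arg z_{j_i,0}$. If the full trajectory $\Gamma_{z_0}$ were dense in $\mathbb T$, then by continuity of $\pi$ its image would be dense in $\pi(\mathbb T)=C_{j_1}\times\cdots\times C_{j_m}$, a contradiction. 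Hence no trajectory is dense in $\mathbb T$.

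The two arguments are independent: unboundedness of $\{1/\lambda_k\}$ kills periodicity, while rational commensurability kills density. I expect the only genuinely delicate point to be the clean reduction to a finite-dimensional resonant block --- one must verify that the coordinate projection is continuous on $\ell_2$, that it sends the infinite product torus onto the finite product torus, and that density is preserved under continuous surjections, so that the finite-dimensional Kronecker obstruction (the constancy of $\sum_i n_{j_i}\theta_{j_i}(t)$ along the flow) genuinely obstructs density of $\Gamma_{z_0}$. Everything else reduces to the elementary estimates above.
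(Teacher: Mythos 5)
Your proof is correct. Note that the paper itself gives no proof of this statement --- it is imported from \cite{VS24} as a citation --- so there is nothing internal to compare against; but your argument is the natural one and is fully consistent with the machinery the paper does state. Your periodicity half is exactly the ``only if'' direction of Theorem \ref{t2}: simultaneous return of all coordinates forces $e^{i\lambda_kT}=1$ for every $k$ (using $r_k>0$ on a non-degenerate torus), hence $\lambda_k\geq 2\pi/T$, which contradicts unboundedness of $\{1/\lambda_k\}$; as you observe, rational commensurability is not needed here. Your density half --- extract a nontrivial integer resonance $\sum_i n_{j_i}\lambda_{j_i}=0$, project continuously onto the finite block $C_{j_1}\times\cdots\times C_{j_m}$, and note that the projected flow lives on the proper closed level set $\{\sum_i n_{j_i}\theta_{j_i}=\mathrm{const}\}$ so that density upstairs would contradict Theorem \ref{Yak}(2) downstairs --- is the standard finite-dimensional reduction, and the three small verifications you flag (continuity of the coordinate projection on $\ell_2$, surjectivity of $\pi$ from the product torus onto the finite torus, preservation of density under continuous surjections via $f(\overline A)\subseteq\overline{f(A)}$) are all routine and correctly identified. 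The only point worth making explicit is that $m\geq 2$ and at least one coefficient besides $n_{j_1}$ is nonzero, which follows from positivity of the frequencies exactly as you say.
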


\begin{theorem}\label{t2}\cite{VS24}
{\it Let $\{ \lambda _k\}$ {be} the sequence of frequencies of the Hamiltonian system
(\ref{HNI}). Let $r\in l_2$ be the sequence of positive numbers.
Then every trajectory of the Hamiltonian system (\ref{HNI}) on the torus ${\mathbb T}_r$ is periodic if and only if the sequence  $\{ \lambda _k\}$ satisfies the following condition.
There are the number
$\lambda _0>0$ and the  sequence of natural numbers
$\{ n_k\}$ such that $\lambda _k=\lambda _0n_k,\ k\in \mathbb N$. }
\end{theorem}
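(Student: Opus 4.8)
The plan is to reduce the periodicity of a whole trajectory to a single arithmetic condition on a common period, and then to read off the asserted integer structure directly.

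First I would characterize periodicity coordinatewise. For a point $z_0\in{\mathbb T}_r$ the trajectory is $z_k(t)=e^{i\lambda _kt}z_{k0}$ by (\ref{Z}), and since equality of two elements of $l_2$ is equivalent to equality of all their coordinates, the identity $z(t+T)=z(t)$ for all $t\in{\mathbb R}$ holds if and only if $e^{i\lambda _kT}=1$ for every index $k$ with $z_{k0}\neq 0$. On a non-degenerate torus we have $|z_{k0}|=r_k>0$ for all $k$, so every coordinate is nonzero and the condition collapses to $e^{i\lambda _kT}=1$ for all $k\in{\mathbb N}$, \emph{independently of the chosen initial point} $z_0$. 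Consequently the statements ``every trajectory on ${\mathbb T}_r$ is periodic'', ``some trajectory on ${\mathbb T}_r$ is periodic'', and ``there exists $T>0$ with $\lambda _kT\in 2\pi{\mathbb Z}$ for all $k$'' are all equivalent; this reduction is the heart of the argument.

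The equivalence with the frequency condition is then immediate in both directions. For sufficiency, if $\lambda _k=\lambda _0n_k$ with $\lambda _0>0$ and $n_k\in{\mathbb N}$, then $T=2\pi/\lambda _0$ gives $\lambda _kT=2\pi n_k\in 2\pi{\mathbb Z}$ for every $k$, so every trajectory is periodic with period $T$. For necessity, suppose a common period $T>0$ exists and set $m_k=\lambda _kT/(2\pi)$; the condition $e^{i\lambda _kT}=1$ gives $m_k\in{\mathbb Z}$, while $\lambda _k>0$ and $T>0$ force $m_k\in{\mathbb N}$. Putting $\lambda _0=2\pi/T>0$ then yields $\lambda _k=\lambda _0m_k$ with $m_k\in{\mathbb N}$, which is exactly the asserted structure.

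The only genuinely infinite-dimensional point to watch, and what I regard as the conceptual (rather than technical) crux, is that periodicity demands one and the same $T$ working simultaneously in all of the countably many coordinates; this is automatic because a single periodic curve in $l_2$ already forces a single common period. It is instructive to contrast this with the finite-dimensional Jacobi setting: for finitely many frequencies, rationality of all ratios $\lambda _j/\lambda _k$ is enough to manufacture a common base frequency $\lambda _0$ by clearing a finite least common multiple of denominators, so that periodicity is equivalent to strong rational commensurability. In the countable case that least common multiple need not exist (e.g. $\lambda _k=1/k$, where all ratios are rational yet no common base frequency exists, consistently with Theorem~\ref{t1} since $\{1/\lambda _k\}$ is unbounded), which is precisely why the correct criterion is the strictly stronger ``common integer multiple'' condition $\lambda _k=\lambda _0n_k$ rather than mere commensurability. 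I would also flag that the non-degeneracy hypothesis $r_k>0$ is essential to the first step: were some $r_k=0$, that coordinate would drop out of the periodicity condition and the criterion would correspondingly weaken.
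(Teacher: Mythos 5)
Your proof is correct and complete. Note that this paper states Theorem \ref{t2} only as a citation to \cite{VS24} and contains no proof of it, so there is no in-text argument to compare against; your route --- reducing periodicity on the non-degenerate torus to the pointwise condition $e^{i\lambda _kT}=1$ for all $k$ (which, since every $r_k>0$, is independent of the initial point), then setting $\lambda _0=2\pi /T$ and $n_k=\lambda _kT/(2\pi )\in {\mathbb N}$ --- is the natural one and is consistent with how the theorem is used alongside Theorem \ref{t1} and the example $\lambda _k=1/k!$. Your closing observation that strong rational commensurability alone is insufficient for countably many frequencies correctly identifies the genuinely infinite-dimensional content of the criterion.
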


\begin{remark}
Theorem \ref{t2}
presents the criterion for the periodicity of a trajectory on an infinite-dimensional (countable-dimensional) torus. This criterion is the analogue of the first part of the Jacobi theorem. The condition of Theorem \ref{t2}
on the sequence of frequencies of a countable SHO is coincides with the condition of strong rational commensurability in the Jacobi theorem in the case of finite set of values of the sequence. In the case of infinite set of values the condition of {Theorem} \ref{t2}
implies the condition of strong rational commensurability.
Theorem \ref{t1} shows that for the periodicity of a trajectory, only one condition of strong rational commensurability is not enough.
\end{remark}

\begin{example}\label{exa}
Let us consider the SHO with point measure and sequence of frequencies   $\lambda _k={1\over {k!}},\ k\in \mathbb N$. The countable family of frequencies $\{ \lambda _k,\ k\in \mathbb N\}$ {satisfies} the strong rational commensurability condition. But it is easy to see (\cite{VS24}) that
there is no periodic trajectory of the SHO.
\end{example}


Thus, for the SHO systems in Example \ref{exa}, the linear flow exhibits the following properties:
\begin{enumerate}
    \item \textbf{Non-Transitivity}: No trajectory is dense in the invariant torus $\mathbb{T}$.

    \item \textbf{Projection Behavior}: For every finite-dimensional symplectic subspace $E' \subset E$:
    \begin{itemize}
        \item The projected trajectory $\pi_{E'}(\Gamma)$ is periodic in $\mathbb{T}' = \pi_{E'}(\mathbb{T})$
        \item These projections cannot be dense in any $\mathbb{T}'$
    \end{itemize}

    \item \textbf{New Trajectory Class}: The flow contains trajectories that are:
    \begin{itemize}
        \item Non-periodic in the full torus $\mathbb{T}$
        \item Non-dense in all finite-dimensional projections $\mathbb{T}'$
        \item Not approximable by finite-dimensional transitive behavior
    \end{itemize}
\end{enumerate}
This establishes a third fundamental type of trajectory behavior, distinct from both periodic trajectories and projectively dense trajectories.

\begin{theorem}\label{A} \cite{VS24}
{\it There are dynamical systems (15) (i.e. there are sequences $\{ \lambda _k\}:\ {\mathbb N}_+\to {\mathbb R}_+$) with the following property of trajectories. Every trajectory of this dynamical system on a non-degenerate invariant torus $\mathbb T$ neither periodic nor has a projection to a symplectic subspace such that this projection of the trajectory is dense in the projection of the torus $\mathbb T$ to this subspace.}
\end{theorem}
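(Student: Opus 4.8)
The plan is to exhibit one explicit frequency sequence and reduce the two required properties --- non-periodicity and the absence of a dense finite-dimensional projection --- to results already available in the excerpt. I would take $\lambda_k = 1/k!$, the sequence from the Example above (any strictly positive sequence whose ratios $\lambda_k/\lambda_j$ are all rational while $\{1/\lambda_k\}$ is unbounded would serve equally well). This sequence is rationally commensurable, and $1/\lambda_k = k!\to\infty$, so $\{1/\lambda_k\}$ is unbounded; hence the hypotheses of Theorem \ref{t1} hold for every non-degenerate torus $\mathbb{T}$, which already yields the first half of the conclusion: no trajectory is periodic (and none is dense in the whole $\mathbb{T}$). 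The non-periodicity can also be seen directly from Theorem \ref{t2}: were every trajectory periodic there would exist $\lambda_0>0$ and $n_k\in\mathbb{N}$ with $1/k!=\lambda_0 n_k$, forcing $n_{k+1}=n_k/(k+1)$ and hence $n_k\to 0$, impossible for positive integers. Since on a non-degenerate torus every coordinate $z_{k0}$ is non-zero, periodicity is an all-or-nothing property of the frequency set alone, so failure of the Theorem \ref{t2} condition rules out periodicity for every trajectory on every non-degenerate torus.

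The genuinely new content is the second half: no projection onto a finite-dimensional flow-invariant symplectic subspace is dense. Such a subspace is $E_S=\bigoplus_{k\in S}E_k$ for a finite $S\subset\mathbb{N}$, and the projection of the flow onto $E_S$ is exactly the linear flow of the finite subsystem with frequencies $\{\lambda_k:k\in S\}$. For $|S|\ge 2$ this finite frequency set is strongly rationally commensurable (all its members are rational multiples of one another), so it admits a common period; by the first part of the Jacobi theorem (Theorem \ref{Yak}) the projected trajectory is therefore periodic, i.e.\ a closed curve in the $|S|$-dimensional subtorus $\mathbb{T}_S=\prod_{k\in S}C_k$. A continuous image of a circle is compact and one-dimensional, so it cannot be dense in a torus of dimension $|S|\ge 2$; hence the projection is not dense in $P_{E_S}\mathbb{T}$. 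This conclusion depends only on the rational relations among the $\lambda_k$ and not on the base point, so it holds uniformly over all finite $S$ and all non-degenerate tori. Together with the first paragraph this establishes the assertion for every flow-invariant finite-dimensional symplectic subspace.

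The step I expect to be the main obstacle is upgrading ``flow-invariant symplectic subspace'' to ``arbitrary symplectic subspace,'' if the statement is read in that stronger sense. For a subspace $W$ not aligned with the coordinate planes, $P_W\Gamma$ is no longer a trajectory of any subsystem and the Jacobi dichotomy does not apply directly. My plan here is to pass to the orbit closure: since every finite coordinate projection of $\Gamma$ is a closed periodic curve whose period $T_N=2\pi\,\mathrm{lcm}(1!,\dots,N!)$ satisfies $T_N\to\infty$, the closure $\Sigma=\overline{\Gamma}$ in $L_2$ is a one-dimensional compact connected abelian group (the solenoid arising as the inverse limit of these periodic circles under the covering maps of degree $k+1$). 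Then for any finite-dimensional $W$ one has $\overline{P_W\Gamma}\subseteq P_W\Sigma$ by continuity and compactness, and $P_W\Sigma$ is compact, hence closed; so density of $P_W\Gamma$ in $P_W\mathbb{T}$ would force $P_W\mathbb{T}=P_W\Sigma$. The crux is then a dimension count: $P_W\Sigma$ has topological dimension at most $\dim\Sigma=1$, whereas $P_W\mathbb{T}$ contains the projection of at least one circle $C_k$ as a genuinely two-dimensional set, a contradiction. Making this final comparison precise for every symplectic $W$ --- in particular excluding degenerate positions of $W$ on which $\mathbb{T}$ itself collapses to a curve --- is the delicate point; everything preceding it is routine given Theorems \ref{Yak}, \ref{t1} and \ref{t2}.
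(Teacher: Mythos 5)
Your argument is essentially the paper's own: the paper likewise takes $\lambda_k=1/k!$ (the Example preceding the statement), derives non-periodicity from Theorems \ref{t1}--\ref{t2} because $1/\lambda_k=k!$ is unbounded, and invokes the first part of Jacobi's theorem (Theorem \ref{Yak}) to conclude that the projection of any trajectory onto a finite-dimensional flow-invariant symplectic subspace is periodic and therefore not dense in the projected torus. Your third paragraph, attempting to handle symplectic subspaces not spanned by coordinate planes via the solenoid structure of the orbit closure, goes beyond what the paper actually argues---the paper's justification only covers the coordinate-aligned case---so it is a useful supplement rather than a deviation, though as you note it is only sketched.
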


\begin{remark}
In the case of finite dimensional space $E$ there are two types of Hamiltonian systems (1). Every trajectory of a system of first type is periodic. For every system of the second type there is a symplectic subspace of the space $E$ such that the projection of any trajectory on this subspace is dense in the projection of the invariant torus which contains the trajectory. If the above symplectic subspace coincides with the space $E$ then the system is called transitive. According to Weyl theorem in this case the flow of the system (1) is ergodic with respect to Lebesgue measure on the invariant torus.
\end{remark}

\medskip

Let us study the existence of a trajectory $\Gamma _{z_0}=\{ z(t),\ t\in \mathbb R \}$, of the Hamiltonian system (\ref{HNI}) such that this trajectory is dense everywhere in a torus
$\mathbb T$.

\begin{theorem}\label{t3}\cite{VS24}
{\it A trajectory of {the} Hamiltonian system
(\ref{HNI}) is dense everywhere on the torus  $\mathbb T$ in the Hilbert space
$E$
if and only if the set of numbers $\{ \lambda _k,
, k\in {\mathbb N}\}$, is not rationally commensurable.}
\end{theorem}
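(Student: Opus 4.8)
The plan is to reduce the infinite-dimensional statement to the finite-dimensional Weyl--Kronecker theorem (Theorem~\ref{Yak}) by exploiting two features of the setting: the torus $\mathbb T$ is the Cartesian product $\prod_k C_k$ of circles (see (\ref{c2})), and the radii are square-summable, $\sum_k r_k^2<+\infty$ (see (\ref{32})). First I would record the elementary equivalence that the family $\{\lambda_k\}$ is \emph{not} rationally commensurable (Definition 1) if and only if every finite subset $\{\lambda_{k_1},\dots,\lambda_{k_m}\}$ is rationally independent, i.e.\ satisfies the Weyl condition (\ref{10}); indeed any witnessing commensurability relation involves only finitely many frequencies, and clearing denominators turns it into a nontrivial integer relation. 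Writing $z_{k0}=r_k e^{i\alpha_k}$ and, for a target $v\in\mathbb T$, $v_k=r_k e^{i\beta_k}$, one has $\|z(t)-v\|^2=\sum_k r_k^2\,|e^{i(\alpha_k+\lambda_k t)}-e^{i\beta_k}|^2$, so density of a trajectory in $\mathbb T$ is a statement about simultaneous approximation of the phases $\lambda_k t$ modulo $2\pi$.

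For the necessity direction I would argue by contraposition. Suppose $\{\lambda_k\}$ is rationally commensurable; then a nontrivial integer relation $\sum_k n_k\lambda_k=0$ holds with finitely many nonzero $n_k$, supported on a finite index set $S$. Let $\pi_S:\ell_2\to{\mathbb C}^{|S|}$ be the coordinate projection; it is $1$-Lipschitz, and $\pi_S(\mathbb T)=T_S$ is the finite-dimensional torus with radii $\{r_k\}_{k\in S}$. The image $\pi_S(\Gamma_{z_0})$ is exactly the trajectory of the finite subsystem with frequencies $\{\lambda_k\}_{k\in S}$, which by the second part of Theorem~\ref{Yak} is \emph{not} dense in $T_S$, since these frequencies violate (\ref{10}). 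By continuity $\pi_S(\mathbb T)=\pi_S(\overline{\Gamma_{z_0}})\subseteq\overline{\pi_S(\Gamma_{z_0})}$, so density of $\Gamma_{z_0}$ in $\mathbb T$ would force density of $\pi_S(\Gamma_{z_0})$ in $T_S$, a contradiction. Hence a dense trajectory forces $\{\lambda_k\}$ to be not rationally commensurable.

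For the sufficiency direction, which carries the real content, I would assume $\{\lambda_k\}$ is not rationally commensurable, fix $v\in\mathbb T$ and $\varepsilon>0$. Using $\sum_k r_k^2<+\infty$, choose $N$ so large that $4\sum_{k>N}r_k^2<\varepsilon^2/2$; then $\sum_{k>N}r_k^2\,|e^{i(\alpha_k+\lambda_k t)}-e^{i\beta_k}|^2\le 4\sum_{k>N}r_k^2<\varepsilon^2/2$ for \emph{every} $t$, so the tail is harmless regardless of phases. It remains to find $t$ with $\lambda_k t\equiv\beta_k-\alpha_k\pmod{2\pi}$ up to a prescribed tolerance for $k=1,\dots,N$. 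By the equivalence above $\lambda_1,\dots,\lambda_N$ are rationally independent, so by the continuous-time Kronecker theorem (the finite-dimensional content of Theorem~\ref{Yak}) the orbit $\{(\lambda_1 t,\dots,\lambda_N t)\bmod 2\pi:\ t\in{\mathbb R}\}$ is dense in the $N$-torus; choosing such a $t$ makes the head contribution $<\varepsilon^2/2$, whence $\|z(t)-v\|<\varepsilon$. As $v$ and $\varepsilon$ are arbitrary, $\Gamma_{z_0}$ is dense in $\mathbb T$.

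The main obstacle is the passage from approximation of finitely many phases to genuine closeness in the $\ell_2$-metric of the infinite torus; the point that makes this work is that square-summability of the radii renders the tail uniformly small in $t$, so that the time $t$ chosen to control the first $N$ coordinates never spoils the remaining ones. The finite-dimensional density input itself is supplied verbatim by Theorem~\ref{Yak}, so the genuinely new step is only this decoupling of head and tail, together with the observation that rational independence of the whole family is equivalent to rational independence of each finite initial segment.
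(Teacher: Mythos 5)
Your proof is correct. The paper states Theorem \ref{t3} only as a citation of \cite{VS24} and gives no proof of its own, but your argument --- necessity by projecting onto the finitely many coordinates carrying a rational relation and invoking the finite-dimensional Weyl--Kronecker theorem, sufficiency by combining Kronecker density of the head with the uniform-in-$t$ smallness of the $\ell_2$-tail guaranteed by $\sum_k r_k^2<\infty$ --- is the standard and evidently intended route, and all steps (including the equivalence of ``not rationally commensurable'' with rational independence of every finite subfamily) check out.
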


\begin{remark}
Theorem \ref{t3}
is {a} criterion of density everywhere in {an} invariant torus  of
a countable SHO with {Hamilton function} (\ref{HNI}).
This criterion is the {analogue} of the second part of {Jacobi's}
theorem.
\end{remark}

\section{System of oscillators with an absolutely continuous measure}
Consider a system of harmonic oscillator (SHO) described by Eq.~\eqref{eq1}, 
where the configuration space is \( X = \mathbb{R} \). Here, \( \lambda \colon \mathbb{R} \to \mathbb{R} \) 
is a Lebesgue-measurable function. The measure \( \mu \) is either the Lebesgue measure on \( \mathbb{R} \), or a nonnegative, absolutely continuous measure with a density \( \rho \in L^{1}_{\text{loc}}(\mathbb{R}) \).

First, consider the case where $\mu$ is the Lebesgue measure. The corresponding phase space is the Hilbert space $\mathcal{H} = L^2(X, \mu; \mathbb{C}) = L^2(\mathbb{R})$, which describes the Hamiltonian system with Hamiltonian function~\eqref{H}. The system's dynamics are governed by the Hamilton equations:
\begin{equation}\label{heq}
\begin{aligned}
    \dot{q}(t,x) &= \lambda(x) q(t,x), \\
    \dot{p}(t,x) &= -\lambda(x) p(t,x),
\end{aligned}
\quad \text{for } t \in \mathbb{R}, \ x \in \mathbb{R}.
\end{equation}
The space 
$$
\mathcal{H}_1 = \left\{ C(k) \colon \int_{\mathbb{R}} \vert \lambda(k)\vert \, \vert C(k)\vert ^2 \, dk < \infty \right\}
$$
defines the domain of the Hamiltonian function
$$
H(p,q) = \int_{\mathbb{R}} \lambda(k) \big(p(k)^2 + q(k)^2\big) \, dk,
$$
where $C(k) = q(k) + ip(k)$ for $k \in \mathbb{R}$. 
The space
$$
\mathcal{H}_2 = \left\{ C(k) \colon \int_{\mathbb{R}} \vert \lambda(k) C(k)\vert ^2 \, dk < \infty \right\}
$$
serves as the domain of the Hamiltonian vector field $(q,p) \mapsto \mathbf{J} \nabla H(q,p)$.

An example of the above Hamiltonian system is the is presented by sine-Gordon equation which has the form (\ref{heq}) with $\lambda (k)={\sqrt {k^2+m^2}}$ with some number $m\geq 0$.

System of Hamiltonian equations (\ref{heq}) defines the Hamiltonian flow 
\begin{equation}\label{has}
{\bf \Phi }_t(u)=
e^{it\lambda }u,\ t\in {\mathbb R},\ u\in {\mathcal H}.
\end{equation}
The family of invariant manifolds for the Hamiltonian system consists of tori 
$$
\mathbb{T}_r = \left\{ u \in \mathcal{H} \colon \vert u(k) \vert = r(k),\ k \in \mathbb{R} \right\},
$$
parametrized by functions $r \in L^2(\mathbb{R}, [0, +\infty))$. 

Key properties:
\begin{itemize}
    \item Each torus $\mathbb{T}_r$ corresponds uniquely to a function $r \in L^2(\mathbb{R}, [0, +\infty))$
    \item Every function $u \in L^2(\mathbb{R}, \mathbb{C})$ determines a torus $T_{\vert u \vert}$ satisfying $u \in T_{\vert u \vert}$
\end{itemize}

{Second, let us consider tori that correspond to self-adjoint operator with continuous spectrum.}

A representation of an arbitrary unitary group in a complex Hilbert space 
$\mathbb H$ by a system of harmonic oscillators (\ref{eq1}) is realized by using of the spectral theorem. 

According to the spectral theorem, for every self-adjoint operator $\mathbf{L}$ in the space $\mathbb{H}$, there exists a nonnegative measure $\mu$ on $\mathbb{R}$ such that $\mathbf{L}$ is unitarily equivalent to the multiplication operator $\mathbf{X}$ (multiplication by the argument) in the Hilbert space $\mathcal{H} = L^2(\mathbb{R}, \mu, \mathbb{C})$. This implies the spectral representation
$$
e^{it\mathbf{L}} = \mathbf{W}^{-1} e^{itx} \mathbf{W}, \quad t \in \mathbb{R},
$$
where $\mathbf{W} \colon \mathbb{H} \to \mathcal{H}$ is a unitary isomorphism.

Therefore, we may assume that $X = \mathbb{R}$ in Eq.~\eqref{eq1} and that the operator group $\mathbf{V}$ acts on $\mathcal{H}$ via the rule \eqref{2} with $\lambda(x) = x$. The measure $\mu$ can be any nonnegative Borel measure defined on the $\sigma$-algebra $\mathcal{B}$ of Borel subsets.

Let us study conditions of periodicity and transitivity for a trajectory $\Gamma _u$ in the torus $T_u\subset {\mathcal H}=L^2({\mathbb R},{\mathcal B}({\mathbb R}),\mu ,{\mathbb C})$ in the case of arbitrary measure $\mu :\ {\mathcal B}({\mathbb R})\to \mathbb R$.

{\bf Periodicity property}. Now we study the periodicity
property for a trajectory of the Hamiltonian system
(\ref{heq}) with 
$$\lambda (x)=x,\ x\in \mathbb R$$ 
and arbitrary nonnegative measure $\mu$ on a measurable space $({\mathbb R},{\mathcal B}(\mathbb R))$. A trajectory of this system is generated by the one-parametric group of mappings ${\bf U}_t:\ {\mathcal H}\to {\mathcal H},\ t\in \mathbb R$. Here ${\bf U}_t(u)(k)=e^{itk}u(k),\ k\in {\mathbb R}$.

A trajectory with initial point $u \in \mathcal{H}$ is periodic if it satisfies:
\begin{equation}\label{Pe}
    \exists T > 0 \colon \| u(k)(e^{iTk} - 1) \|_{\mathcal{H}} = 0,
\end{equation}
which is equivalent to the condition $\mathbf{U}_T u = u$ for some $T > 0$.

If there exists a finite collection $W = \{\lambda_1, \ldots, \lambda_N\}$ of strongly rationally commensurable frequencies with $\mu(W) > 0$,  then the Hamiltonian system (\ref{heq}) admits a periodic trajectory in the phase space $\mathcal H$ with the initial point $u\in {\mathcal H}$ such that this periodic trajectory belongs to the set $W$.

In the considered representation (where $\lambda (x)=x,\ x\in \mathbb R$) the condition of periodicity for a trajectory $\Gamma _u$ has the form
\begin{equation}\label{uper}
\exists \ T>0:\ \int\limits_{\mathbb R}\vert u(k)(e^{iTk}-1)\vert ^2d\mu (k)=4\int\limits_{\mathbb R}\vert u(k)\vert ^2\sin ^2({{Tk}\over 2})d\mu (k)=0.
\end{equation}

A number $T > 0$ is called a period of the SHO  $(X, \mu, \lambda)$ if the time-$T$ evolution operator $\mathbf{\Phi}_T$ has 1 as an eigenvalue, where $\{\mathbf{\Phi}_t\}_{t\in\mathbb{R}}$ is the Hamiltonian flow in $\mathcal{H}$ defined by \eqref{has}.

{Now we obtain a condition of the absence of a periodic trajectory for a Hamiltonian system (\ref{heq}). 

Let $X=\mathbb R$, $\mu$ be a nonnegative Radon measure on the measurable space $({\mathbb R}, {\mathcal B})$ and $\lambda $ be a measurable function ${\mathbb R}\to {\mathbb R}$.}
In this case the trajectory with an initial point $u\in {\mathcal H}$ is periodic if and only if
$$\ \exists \ T>0:\ 
{\bf \Phi}_Tu-u= 0.
$$

\begin{theorem}\label{Lemma 8}.
 {\it Let the measure $\mu $ in SHO is absolutely continuous with respect to the Lebesgue measure with the density $\rho \in L^1_{{\rm loc}
}({\mathbb R})$. Let $\lambda (k),\ k\in \mathbb R$, be a function which is strictly monotone and continuously differentiable on some interval 
$\Delta \subset R$. Let 
$$
r\in L^2({\mathbb R},\mu ,\ {\mathbb R}_+):\ 
\int\limits_{\Delta}(r(x))^2\rho (x)dx >0.
$$ 
Then every trajectory $\Gamma \in {\mathbb T}_r$ of a Hamiltonian system (\ref{heq}) is not periodic.}
\end{theorem}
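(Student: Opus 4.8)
The plan is to turn periodicity into an almost-everywhere arithmetic condition on $w$ and then to use strict monotonicity to show that this condition cannot hold on $\Delta$, where the mass of the trajectory is nonzero.

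First I would fix an initial point $u \in \mathbb{T}_r$, so that $|u(x)| = r(x)$ for $\mu$-a.a. $x$, and recall from (\ref{uper}) that the trajectory $\Gamma_u$ is periodic with some period $T > 0$ if and only if $\| \mathbf{\Phi}_T u - u \|_{\mathcal H}^2 = \int_{\mathbb R} |u(k)|^2 |e^{iTw(k)} - 1|^2 \, d\mu(k) = 0$. Since $\mu = \rho\, dx$ with $\rho \in L_{1,\mathrm{loc}}(\mathbb R)$, the measure $\mu_u = |u|^2\mu$ (which is a countably additive nonnegative Borel measure by Lemma 2) coincides with the measure whose density with respect to the Lebesgue measure is $r^2\rho$. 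The integrand is nonnegative, so the vanishing of the integral is equivalent to $e^{iTw(k)} = 1$, i.e. $w(k) \in \tfrac{2\pi}{T}\mathbb Z$, for $\mu_u$-almost every $k \in \mathbb R$; in particular this must hold for $\mu_u$-almost every $k \in \Delta$.

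Next I would restrict attention to $\Delta$ and exploit the monotonicity of $w$. Set $R_T = \{ k \in \Delta : w(k) \in \tfrac{2\pi}{T}\mathbb Z \}$. Because $w$ is strictly monotone on $\Delta$, for each $n \in \mathbb Z$ the level set $\{ k \in \Delta : w(k) = \tfrac{2\pi n}{T} \}$ contains at most one point, so $R_T$ is a union of at most countably many points and hence is countable. A countable set has Lebesgue measure zero, and since $\mu_u$ is absolutely continuous with respect to the Lebesgue measure with density $r^2\rho$, it follows that $\mu_u(R_T) = 0$.

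Finally I would assemble the contradiction. If $\Gamma_u$ were periodic with period $T$, then by the first step $e^{iTw} = 1$ holds $\mu_u$-a.e. on $\Delta$, i.e. $\mu_u(\Delta \setminus R_T) = 0$, whence $\mu_u(\Delta) = \mu_u(R_T) = 0$; but the second step together with the hypothesis gives $\mu_u(\Delta) = \int_\Delta r(x)^2 \rho(x)\, dx > 0$, a contradiction. Therefore no period $T > 0$ exists, and since $u \in \mathbb{T}_r$ was arbitrary every trajectory in $\mathbb{T}_r$ is non-periodic. I expect the only genuinely delicate point to be the measure-theoretic bookkeeping in the first step: passing from the vanishing of the $L_2$-integral to the pointwise condition on the correct $\mu_u$-null set and confirming that $\mu_u$ inherits absolute continuity with density $r^2\rho$. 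Note that continuous differentiability of $w$ is not actually needed here, since strict monotonicity alone already forces each level set to be a singleton; the $C^1$ hypothesis is thus stronger than what the non-periodicity conclusion requires.
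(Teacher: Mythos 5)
Your proof is correct, but it takes a genuinely different route from the paper's. The paper argues with the whole sequence of multiples $mT$ of the putative period: after the change of variables $\xi = w(k)$ on $\Delta$ (which is exactly where strict monotonicity \emph{and} continuous differentiability are used, to produce an $L_1$ density in the new variable), the oscillatory term $\int_\Delta |u|^2\rho\cos(Tw(k)m)\,dk$ tends to zero as $m\to\infty$ by the Riemann--Lebesgue lemma, so for large $m$ the quantity $\|\mathbf{\Phi}_{mT}u-u\|^2_{\mathcal H}$ is bounded below by $\int_\Delta r^2\rho\,dx>0$, contradicting periodicity. You instead extract the pointwise consequence $e^{iTw(k)}=1$ for $\mu_u$-a.e.\ $k$ from the vanishing of a single nonnegative integral, and then dispose of the exceptional set $R_T$ by noting that injectivity of $w$ on $\Delta$ makes it countable, hence Lebesgue-null, hence $\mu_u$-null by absolute continuity. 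This is more elementary (no Fourier-analytic input) and, as you correctly observe, it genuinely weakens the hypotheses: continuous differentiability of $w$ plays no role, and even strict monotonicity could be relaxed to the requirement that no level set of $w|_{\Delta}$ carries positive $\mu_u$-mass. What the paper's Riemann--Lebesgue computation buys in exchange is a quantitative lower bound on $\|\mathbf{\Phi}_t u-u\|_{\mathcal H}$ along a sequence $t=mT\to\infty$, which is the same mechanism reused for the non-transitivity and wandering-point statements elsewhere in Sections 6 and 7; your argument, being tailored to the exact vanishing in the periodicity condition, does not yield such a bound but is the cleaner proof of the statement as given.
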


\begin{proof}
Assume, by contradiction, that there exists a point $u \in \mathbb{T}_r$ which is periodic for the Hamiltonian system \eqref{heq}. 
Therefore,
according to (\ref{Pe}) there is a number $T>0$ such that 
\begin{equation}\label{prot}
\int\limits_{\mathbb R}\vert u(k)\vert ^2\vert (e^{iT\lambda (k)m}-1)\vert ^2\rho (k)dk=0\ \forall \ m\in \mathbb N.
\end{equation}
On the other hand, $$\int\limits_{\Delta }\vert u(k)\vert ^2\vert (e^{iT\lambda (k)m}-1)\vert ^2\rho (k)dk=2\int\limits_{\Delta }r^2(k)\rho (k)(1-\cos (T\lambda(k)m))dk=$$ $$=2\int\limits_{\Delta}(r(x))^2\rho (x)dx-2\int\limits_{\Delta }\lvert u(k)\rvert ^2\rho (k)\cos (T\lambda(k)m)dk.$$ 

Applying the substitution $k = \lambda^{-1}(\xi)$ for $\xi \in \lambda(\Delta)$, we derive the following expression for the second term:
$$\int\limits_{\Delta }\lvert u(k)\rvert ^2\rho (k)\cos (T\lambda  (k)m)dk=\int\limits_{\lambda (\Delta )}\lvert u(\lambda  ^{-1}(\xi ))\rvert ^2\cos (T\xi m)\rho (\lambda ^{-1}(\xi ))d  (\lambda^{-1}(\xi )).$$ 

We have $\lvert u(\lambda ^{-1}(\xi ))\rvert ^2\rho (\lambda  ^{-1}(\xi )){{d \lambda ^{-1}(\xi )}\over {d \xi}}\in L^1(\mathbb R)$ since $\int\limits_{\mathbb R}\lvert u(\lambda ^{-1}(\xi ))\rvert 2\rho (\lambda ^{-1}(\xi )){{d \lambda ^{-1}(\xi )}\over {d \xi}}d\xi =\int\limits_{\mathbb R}\lvert u(k)\rvert ^2\rho (k)dk$. Then
$\lim\limits_{m\to \infty }\int\limits_{\mathbb R}\lvert u(k)\rvert ^2\cos (T\lambda (k)m)dk=0$ according to Riemann theorem on oscillations. Hence,
 $$\int\limits_{\Delta }\lvert u(k)\rvert ^2\lvert (e^{iT\lambda (k)m}-1)\rvert ^2\rho (k)dk\geq {1\over 2}\int\limits_{\Delta}(r(x))^2\rho (x)dx$$
for all sufficiently large $m\in \mathbb N$.
According to the Theorem assumption we obtain the contradiction with (\ref{prot}). 
\end{proof}

{\bf Transitivity property}
Consider the family of trajectories of the SHO  \eqref{eq1} on an invariant torus, where:
\begin{itemize}
    \item The configuration space is $X = \mathbb{R}$,
    \item The measure $\mu$ is a non-negative Radon measure on the space ${\mathbb R}$. 
    \item The frequency function $\lambda \colon X \to \mathbb{R}$ is $\lambda(k)=k,\ k\in \mathbb R$.
\end{itemize}
The torus $\mathbb{T}_u$ inherits its topology from the embedding into the Hilbert space $\mathcal{H} = L^2(\mathbb{R})$. A trajectory $\Gamma_u \subset \mathbb{T}_u$ is \emph{transitive} if for every $g \in \mathbb{T}_u$ and every $\epsilon > 0$, there exists $t > 0$ such that
\begin{equation}\label{Den}
    \| \mathbf{U}_t u - g \|_{L^2}^2 = \int_{\mathbb{R}} \vert g(k) - u(k) e^{i \lambda(k) t} \vert^2 \, dk < \epsilon.
\end{equation}

\begin{lemma}\label{le9}
Let $\mu$ be absolutely continuous with respect to the Lebesgue measure, with density $\rho \in L^1(\mathbb{R})$. For any nontrivial element $u \in \mathcal{H} = L^2(\mathbb{R}, \mathcal{B}, \mu; \mathbb{C})$, the trajectory $\Gamma_u$ is not dense in the torus $\mathbb{T}_{\vert u\vert }$. 
\end{lemma}

This statement is the consequence of the Theorem \ref{te1} since $\hat \mu (t)=\hat \rho (t)\to 0$ as $t\to \infty$ according to Riemann oscillation's theorem.

\begin{lemma}\label{le10}
Consider an SHO with:
\begin{itemize}
    \item An absolutely continuous measure $\mu$ (w.r.t.\ Lebesgue measure) having density $\rho \in L_{1,\mathrm{loc}}(\mathbb{R})$
    \item A frequency function $\lambda \colon \mathbb{R} \to \mathbb{R}$ that is strictly monotone and $C^1$-smooth on some interval $\Delta \subset \mathbb{R}$
    \item An amplitude profile $r \in L^2(\mathbb{R}, \mu; \mathbb{R}_+)$ with $\int_\Delta r(x)^2 \rho(x) \, dx > 0$
\end{itemize}
Then there is no transitive trajectory on the torus $\mathbb{T}_{r}$.
\end{lemma}

\begin{proof}Under the assumption of Lemma \ref{le10}
the measure $\mu _r$ has the absolutely integrable density $\vert r\vert ^2\rho$ with respect to the Lebesgue measure. Hence, $\mu _r\in {\rm ca}({\mathbb R},{\mathcal B})$.

Let $u,v\in {\mathbb T}_r$. Then, according to Lemma \ref{le3} there exists $\phi \in L^{\infty }({\mathbb R},\mu _r,{\mathbb C})$ with $\vert \phi (x)\vert =1$ $\mu _r$-a.e. such that $u(x)=\phi (x)v(x)$ $\mu _r$-a.e.

Under the assumption of Lemma \ref{le10}
the condition (\ref{Den}) is equivalent to the condition
\begin{equation}\label{Den2}
\exists\ t>0:\ 
\| {\bf U}_tu-v\|_{L_2}=
\int\limits_R\rho (\lambda ^{-1}(p))\lvert 1-e^{ipt}\rvert ^2\lvert r(\lambda ^{-1}(p))\rvert ^2\, \vert {{\partial \lambda ^{-1}(p) }\over {\partial p}}\vert \, dp<\epsilon 
\end{equation}

On the other hand, for every $t>0$ the inequality holds
$$
\int\limits_R\rho (\lambda^{-1}(p))\lvert 1-e^{ipt}\rvert ^2\lvert r(\lambda^{-1}(p))\rvert ^2\ \vert {{\partial \lambda^{-1}(p) }\over {\partial p}}\vert \, dp\geq $$
$$
\int\limits_{\Delta }\lvert 1-e^{ipt}\rvert ^2\rho (\lambda^{-1}(p))\lvert r(\lambda^{-1}(p))\rvert ^2\ \vert {{\partial \lambda^{-1}(p) }\over {\partial p}}\vert \, dp>0.
$$
Under the assumption of Lemma \ref{le10}
the measure $\nu $ with the absolutely integrable density $\rho (\lambda^{-1}(p))\lvert r(\lambda^{-1}(p))\rvert ^2\ \vert {{\partial \lambda^{-1}(p) }\over {\partial p}}\vert $ with respect to the Lebesgue measure satisfies the condition $\mu _r\in {\rm ca}({\Delta},{\mathcal B})$.

Therefore, according to Lemma \ref{le1}, the inversion of the condition (\ref{Den2}) holds. Thus,   there is no transitive trajectory on the torus $\mathbb{T}_{r}$.
\end{proof}

Let us consider the example 
$\lambda(k)={\sqrt {1+k^2}},\ k\in \mathbb R$, of sine-Gordon Hamiltonian system. Let functions $\rho ,\, r$ and the segment $\Delta$ satisfy conditions of Lemma \ref{le10}. 
Then, as it is shown in the proof of Lemma \ref{le10} there is no transitive trajectory on the torus ${\mathbb T}_r$.

\section
{Wandering and non-wandering property of the flow of countable and continuous SHO.}

Let us remember a definition of a wandering point of a flow.

\begin{definition}\cite{Efremova}[Nonwandering point]\label{def:nwpoint}
A point $A$ in the phase space of a flow $\{\mathbf{V}_t\}_{t \geq 0}$ is called \emph{nonwandering} if for every neighborhood $\mathcal{O}(A)$ of $A$ and every $T > 0$, there exists $t > T$ such that
$$
\mathcal{O}(A) \cap \mathbf{V}_t(\mathcal{O}(A)) \neq \emptyset.
$$
Otherwise, $A$ is called a \emph{wandering point} of the flow.
\end{definition}

\begin{theorem}\label{th51}
{\it For the SHO with a point measure $\mu$, every point in the phase space $E$ is nonwandering under the flow $\{\mathbf{\Phi}_t\}_{t\in\mathbb{R}}$ of  Hamiltonian system \eqref{HNI}.
}
\end{theorem}

\begin{proof} Let $z\in E$. Then, there is {a} torus ${\mathbb T}_r$ such that
$z\in {\mathbb T}_r$. Let us fix a number $\epsilon >0$. Then, there is a number $m\in \mathbb N$ such that $\sum\limits_{k=m+1}^{\infty }r_k^2<{{\epsilon ^2}\over 8}$. Therefore, for every point of the torus ${\mathbb T}_r$ the distance between this point and its orthogonal projection on the subspace $F_m=\{ (p,q)\in E:\ p_i=0,\, q_i=0,\ \forall \ i=1,...,m\}$ is {not}
greater than  ${{\epsilon }\over 2}$.

According to Poincare's {recurrence} theorem for {a} dynamics in
{a} finite dimensional subspace, the orthogonal projection
{$F_m^{\bot}$ of} $z_m$ of the point $z$ {into} the subspace $F_m^{\bot}$ {is a} nonwandering point of the flow (\ref{Z}) in the invariant torus
${\mathbb T}_r^m={\mathbb T}_r\bigcap F_m^{\bot}$.
Hence, the point $z$ is {a} nonwandering point of the flow (\ref{Z}).
\end{proof}

\begin{theorem}\label{th52}
Let $(X, \mu)$ with $X = \mathbb{R}$ be a measure space where:
\begin{itemize}
    \item $\mu$ is absolutely continuous with respect to Lebesgue measure
    \item The density $\rho = d\mu/dx$ satisfies $\rho \in L^1_{\mathrm{loc}}(\mathbb{R})$
\end{itemize}
Then every point $u \in \mathcal{H}$ on the invariant torus $\mathbb{T}_{\vert u\vert }$ is wandering for the flow $\{\mathbf{\Phi}_t\}$ defined in \eqref{has}. 
\end{theorem}

\begin{proof}
For any point $u \in \mathcal{H}$, its trajectory $\mathbf{\Phi}_t u$ ($t \geq 0$) satisfies the weak convergence property:
\[
\mathrm{w}\text{-}\!\lim_{t \to \infty} \mathbf{\Phi}_t u = 0,
\]
by the Riemann-Lebesgue lemma for oscillations. This implies the existence of $T > 0$ such that for all $t > T$:
\[
\lvert (\mathbf{\Phi}_t u, u) \rvert < \tfrac{1}{16} \|u\|^2.
\]

Consequently, we obtain the lower bound:
\[
\|\mathbf{\Phi}_t u - u\|_{\mathcal{H}}^2 = 2\|u\|^2 - 2\mathrm{Re}(\mathbf{\Phi}_t u, u) > 2\|u\|^2 - \tfrac{1}{8}\|u\|^2 > \tfrac{15}{16}\|u\|^2,
\]
and thus $\|\mathbf{\Phi}_t u - u\|_{\mathcal{H}} \geq \frac{\sqrt{15}}{4}\|u\|$.

Using the isometric property of $\mathbf{\Phi}_t$, we observe:
\[
\mathbf{\Phi}_t(B_{1/4}(u)) = B_{1/4}(\mathbf{\Phi}_t u)),
\]
where $B_{1/4}(u) := \{ y \in \mathcal{H} : \|y - u\| < \tfrac{1}{4}\|u\| \}$. 

For any $v \in B_{1/4}(u))$, the triangle inequality yields:
\[
\|\mathbf{\Phi}_t v - u\| \geq \|\mathbf{\Phi}_t u - u\| - \|\mathbf{\Phi}_t(v - u)\| > \tfrac{\sqrt{15}}{4} - \tfrac{1}{4} > \tfrac{1}{4},
\]
when $\|u\| = 1$. Thus, $\mathbf{\Phi}_t(B_{1/4}(u))) \cap B_{1/4}(u)) = \emptyset$ for all $t > T$, proving the wandering property.
\end{proof}

Thus, the dynamical behavior of harmonic oscillator systems exhibits a fundamental dichotomy:

\begin{itemize}
    \item \textbf{Countable systems}: For any invariant torus $\mathbb{T}_r$ of a countable SHO system, every point is non-wandering (Theorem~\ref{th51}). This reflects the complete recurrence of discrete-spectrum systems.

    \item \textbf{Continuous systems}: For continuous SHO systems with absolutely continuous measures satisfying $\rho \in L^1_{\mathrm{loc}}(\mathbb{R})$, under the conditions of Theorem~\ref{th52}, all points on invariant tori are wandering under the linear flow.
\end{itemize}

\section{Conclusion}
In this work, we present a comparative analysis of dynamical properties for three classes of harmonic oscillator systems:
\begin{itemize}
    \item Finite-dimensional systems
    \item Countably infinite systems
    \item Continuous (uncountable) systems
\end{itemize}

We establish necessary and sufficient conditions for:
\begin{itemize}
    \item Periodicity of trajectories
    \item Topological transitivity of flows on invariant tori
    \item Wandering property for points of invariant tori
\end{itemize}
\begin{enumerate}
    \item We prove that every point in the phase space is nonwandering for the infinite-dimensional linear flow generated by a countable system of harmonic oscillators.

    \item For continuous systems of harmonic oscillators, we establish that all points on non-degenerate invariant tori are wandering under the linear flow, which holds for a broad class of such systems.
\end{enumerate}
Our analysis reveals a fundamental dichotomy in infinite-dimensional linear flows:
\begin{itemize}
    \item \textbf{Novel trajectory class}: We demonstrate the existence of a previously uncharacterized type of trajectory that:
    \begin{itemize}
        \item Is absent in finite-dimensional systems
        \item Exhibits neither periodicity nor transitivity in any invariant 4-dimensional symplectic subspace
    \end{itemize}

    \item \textbf{Typical behavior}:
    \begin{itemize}
        \item For countable oscillator systems: Transitive trajectories are generic
        \item For continuous oscillator systems: The generic trajectory is neither periodic nor transitive
    \end{itemize}
\end{itemize}

\section{Appendix I. Infinite-dimensional Hamiltonian system}

In the present paper the following description of infinite-dimensional Hamiltonian system is considered (\cite{Chernoff, Khrennikov, KS-18}).

\begin{definition}[Infinite-dimensional Hamiltonian system]\label{def:inf-ham-sys}
An infinite-dimensional Hamiltonian system is characterized by the triplet $(E, \omega, h)$, where:
\begin{itemize}
    \item $E$ is an infinite-dimensional real separable Hilbert space,
    \item $\omega \colon E \times E \to \mathbb{R}$ is a shift-invariant symplectic form (i.e., a non-degenerate, skew-symmetric bilinear form),
    \item $h \colon D(h) \to \mathbb{R}$ is a densely defined, Frechet-differentiable Hamiltonian function on a dense linear submanifold $D(h) \subset E$.
\end{itemize}
\end{definition}

A symplectic form $\omega$ on a real separable Hilbert space $E$ is called \emph{native} if there exists an orthonormal basis $\{e_k\}_{k\in\mathbb{N}}$ of $E$ satisfying the canonical symplectic relations:
\[
\omega(e_{2k-1}, e_{2j}) = \delta_{j,k}, \quad \omega(e_{2k}, e_{2j-1}) = -\delta_{j,k}, \quad \omega(e_{k}, e_{l}) = 0 \text{ otherwise},
\]
for all $j,k \in \mathbb{N}$, where $\delta_{j,k}$ is the Kronecker delta. Such a basis is called a \emph{symplectic basis} for $(E, \omega)$.

For any native symplectic form $\omega$ on $E$, there exists an orthogonal decomposition 
\[
E = P \oplus Q
\]
where $P = \overline{\operatorname{span}}\{g_j\}_{j\in\mathbb{N}}$ with $g_j = e_{2j-1}$ and $Q = \overline{\operatorname{span}}\{f_k\}_{k\in\mathbb{N}}$ with $f_k = e_{2k}$.
Here $\{g_j\}$ and $\{f_k\}$ form complete orthonormal bases for $P$ and $Q$ respectively, satisfying:
\begin{equation}\label{simp}
\omega(g_j, f_k) = \delta_{j,k}, \quad \omega(g_j, g_k) = \omega(f_j, f_k) = 0 \quad \forall j,k \in \mathbb{N}.
\end{equation}
{In particular}, $\{ e_i,\ i\in \mathbb N \}=\{ g_j,f_k;\ j,\, k\in {\mathbb N}\}$ is {a} symplectic basis of the symplectic space $(E,\omega )$
(see \cite{Khrennikov, KS-18}). {The} subspaces  $Q$ and $P$ are called configuration space and momentum space respectively. The space $P$ has the role of the space which is conjugated to the space $Q$
(\cite{Kuksin, Khrennikov, KS-18, SSh-20}).

The bounded bilinear form $\omega$ on $E$ induces a bounded linear operator $\mathbf{J} \in \mathcal{L}(E)$ through the relation:
\begin{equation}\label{eq:omega-operator}
\omega(u, v) = \langle u, \mathbf{J}v \rangle, \quad \forall u,v \in E.
\end{equation}
The operator $\mathbf{J}$ has the following properties:
\begin{itemize}
    \item \textbf{Non-degeneracy}: $\ker \mathbf{J} = \{0\}$
    \item \textbf{Skew-symmetry}: $\mathbf{J}^* = -\mathbf{J}$
    \item \textbf{Isometry}: $\|\mathbf{J}\|_{\mathcal{L}(E)} = 1$ (when $\omega$ is native)
\end{itemize}

Given any orthogonal basis $\{g_j, f_k\}_{j,k\in\mathbb{N}}$ of $E$, the operator $\mathbf{J}$ defined by
\begin{equation}\label{eq:J-def}
    \mathbf{J}g_j = -f_j, \quad \mathbf{J}f_k = g_k \quad (j,k \in \mathbb{N})
\end{equation}
is a non-degenerate skew-symmetric bounded linear operator on $E$. The associated bilinear form
\begin{equation}\label{eq:omega-def}
    \omega(u,v) := \langle u, \mathbf{J}v \rangle
\end{equation}
is a native symplectic form on $E$, for which $\{g_j, f_k\}_{j,k\in\mathbb{N}}$ becomes a symplectic basis.

In a Hamiltonian system $(E,{\omega },h),$ the Hamilton function is {a} functional $h:\ E_1\to \mathbb R$ such that $E_1$ is a dense linear manifold of the Hilbert space $E$ and, moreover, the functional $h$ is {Frechet}-differentiable on a
linear manifold $E_2$ which is dense in the space $E$. The article {\cite{Kuksin}}
describes the properties of an infinite-dimensional Hamiltonian system and its invariant manifolds.

{The} Hamilton equation for {the} Hamiltonian system
$(E,{\omega },h)$ is the following equation $z'(t)=J(h'(z(t))),\, t\in \Delta,$ {in}
the unknown function $z:\ \Delta \to E_2$ where $\Delta $ is some interval of real line.  (\cite{Khrennikov, KS-18}).

A densely defined vector field $\mathbf{v} \colon E_2 \to E$ is called \emph{Hamiltonian} if there exists a Frechet-differentiable function $h \colon E_1 \to \mathbb{R}$ on the dense linear submanifold $E_2 \subset E_1$ such that
\[
\mathbf{v}(z) = \mathbf{J} Dh(z) \quad \forall z \in E_2,
\]
where:
\begin{itemize}
    \item $Dh(z)$ denotes the Frechet derivative of $h$ at $z$
    \item $\mathbf{J}$ is the symplectic operator associated with $\omega$ by (\ref{eq:omega-def})
\end{itemize}

A one-parameter group $g^t,\, t\in {\mathbb R},$ of continuously differentiable self-mappings of the space $E_2$ is called {a} smooth Hamiltonian flow in the space $E_2$ along the Hamiltonian vector field  ${\bf v}:\ E_2\to E$, if  
$${{d }\over {d t}}g^t(q,p)={\bf v}(g^t(q,p)),\quad \forall \ (p,q)\in E_2,\ t\in \mathbb R.$$ 
If the Hamiltonian flow in the space  $E_2$ has {a}
unique continuous extension to the space $E$ then this extension of the flow is called generalized
Hamiltonian flow in the space $E$ along the Hamiltonian vector field
$\bf v$ (also this flow is called the generalized Hamiltonian flow generated by the Hamilton function $h$).

A symplectic operator \(\mathbf{J}\) on the Hilbert space \(E\) defines a bijective mapping  --- referred to as \emph{complexification} --- from \(E\) to the complex Hilbert space \(H\). This mapping ensures that multiplication by the imaginary unit \(i\) in \(H\) corresponds to the action of the operator \(\mathbf{J}\) in \(E\) \cite{Khrennikov}. The inverse mapping \(H \to E\) is termed the \emph{realization} of the complex Hilbert space \(H\) onto the real Hilbert space \(E\). Such a realization transforms the Schr$\ddot{\rm o}$dinger equation into a linear Hamilton equation \cite{GS22, Khrennikov, KS-18, SSh-20}.

Conditions on the vector field $\bf v$ which {are} sufficient {for}
the existence of a flow in the space $E$ along the vector field  are considered in the monograph
S.G. Kreyn \cite{SGK}.
In the case of a Hilbert space $E$ and a linear vector field {a} sufficient condition {for the existence} of the flow is the self-adjointness of the linear operator
$z\to Dh(z)$ in the complexification of the space  $(E,{\bf J})$.
The sufficient conditions for the existence of a Hamiltonian flow which is generated in the space  $E_2$ by {a} nonlinear Schrodinger equation
are described in \cite{Bourgain, 
Ponce}.

Conditions on the Hamiltonian vector field $\mathbf{v}$ that are sufficient for the existence of a flow in the space $E$ along $\mathbf{v}$ are discussed in the monograph by \cite{SGK}. 
In the case where $E$ is a Hilbert space and $\mathbf{v}$ is a linear vector field, a sufficient condition for the existence of the flow is the \textit{self-adjointness} of the linear operator 
\[
z \mapsto Dh(z)
\]
in the complexification of the space $(E, \mathbf{J})$, where $\mathbf{J}$ is the symplectic structure on $E$.
For the specific case of a Hamiltonian flow generated in the space $E_2$ by a nonlinear Schr\"{o}dinger equation, sufficient conditions are detailed in \cite{Bourgain, Ponce}. These typically involve:
\begin{itemize}
    \item Regularity assumptions on the initial data,
    \item Structural conditions on the nonlinearity (e.g., gauge invariance or subcritical growth),
    \item Conservation laws (e.g., mass, energy) ensuring global well-posedness.
\end{itemize}

\section*{Acknowledgments}
The authors express their gratitude to I.\,Ya.~Aref'eva, L.\,S.~Efremova, and E.\,I.~Zelenov for their interest in this work and valuable discussions.  
We are especially indebted to L. Accardi for his meticulous reading of the manuscript and his insightful suggestions, which significantly enhanced the clarity and rigor of the presentation.

\vspace{0.5em}
\noindent
\textbf{Funding:} This work was supported by the Russian Science Foundation (Grant No.~24-11-00039, Steklov Mathematical Institute).

\par\medskip\noindent
{\bf 
Author Contributions} All authors contributed equally.

\par\medskip\noindent
{\bf 
Data Availability} Data sharing not applicable to this article as no datasets were generated or analysed during
the current study.

\par\medskip\noindent
{\bf Conflicts of Interest} No potential conflict of interest was reported by the author(s).

\bibliographystyle{amsplain}

\end{document}